\newtheorem{theorem}{Theorem}
\newtheorem{lemma}[theorem]{Lemma}
\theoremstyle{definition}
\newtheorem{problem}[theorem]{Problem}
\theoremstyle{remark}
\newtheorem{claim}{Claim}
\newtheorem{claimx}{Claim}
\title{Hall's Condition for Partial Latin Squares}
\author{A. J. W. Hilton \thanks{Department of Mathematics and Statistics, University of Reading, Whiteknights, Reading RG6 6AX, U.K. \textit{and} School of Mathematical Sciences, Queen Mary, University of London, Mile End Road, London E1 4NS, U.K. Email: {\tt a.j.w.hilton@reading.ac.uk}}
\and E. R. Vaughan \thanks{School of Mathematical Sciences, Queen Mary, University of London, Mile End Road, London E1 4NS, U.K. Email: {\tt e.vaughan@qmul.ac.uk}}}
\date{\today}
\begin{document}

\maketitle

\providecommand{\size}[1]{\left|#1\right|}
\providecommand{\floor}[1]{\lfloor#1\rfloor}
\providecommand{\ceiling}[1]{\lceil#1\rceil}

\providecommand{\hc}{Hall's Condition}
\providecommand{\hil}{Hall Inequality}
\providecommand{\his}{Hall's Inequalities}
\providecommand{\HI}[1]{\ensuremath{\mathcal{H} (#1)}}

\begin{abstract} \hc\ is a necessary condition for a partial latin square to be completable. Hilton and Johnson showed that for a partial latin square whose filled cells form a rectangle, \hc\ is equivalent to Ryser's Condition, which is a necessary and sufficient condition for completability.

We give what could be regarded as an extension of Ryser's Theorem, by showing that for a partial latin square whose filled cells form a rectangle, where there is at most one empty cell in each column of the rectangle, \hc\ is a necessary and sufficient condition for completability.

It is well-known that the problem of deciding whether a partial latin square is completable is NP-complete. We show that the problem of deciding whether a partial latin square that is promised to satisfy \hc\ is completable is NP-hard. \end{abstract}

\section{Introduction} \label{sec:introduction}

The following is a classical result of Ryser, which provides a necessary and sufficient condition for completability of partial latin squares where the filled cells form a rectangle. In this and subsequent results, we shall assume that the rectangle is in the upper left corner of the square. This is merely for convenience, and in fact Theorem \ref{ryser} holds for any partial latin square that can be put into this form by permuting its rows and columns.

\begin{theorem} \emph{(Ryser, 1951 \cite{bipartite, ryser})} Let $P$ be a partial latin square of order $n$ whose filled cells are those in the upper left $r\times s$ rectangle $R$, for some $r,s \in \{1, \dots, n\}$. Then $P$ is completable if and only if \[\nu(\sigma) \ge r + s - n\] for each symbol $\sigma \in \{1, \dots, n\}$, where $\nu(\sigma)$ is the number of times that $\sigma$ appears in $R$. \label{ryser} \end{theorem}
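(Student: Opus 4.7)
The plan is to prove both directions: the forward implication is a short counting argument, while the converse extends the filled rectangle to a latin square in two stages via classical bipartite graph theorems. For necessity, fix a symbol $\sigma$ and suppose $P$ extends to a latin square $L$. Count occurrences of $\sigma$ in each of the four quadrants cut out by $R$: there are exactly $\nu(\sigma)$ in $R$; since each of the first $r$ rows contains $\sigma$ once, the $r \times (n-s)$ quadrant to the right of $R$ contains $r - \nu(\sigma)$; symmetrically the $(n-r) \times s$ quadrant below $R$ contains $s - \nu(\sigma)$; hence the remaining $(n-r) \times (n-s)$ quadrant must contain $n - r - s + \nu(\sigma)$ occurrences, forcing $\nu(\sigma) \ge r + s - n$.

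For sufficiency I would first extend $P$ to an $r \times n$ latin rectangle. Form a bipartite graph $G$ with vertex classes $\{1,\ldots,r\}$ (rows) and $\{1,\ldots,n\}$ (symbols), and place an edge from row $i$ to symbol $\sigma$ whenever $\sigma$ does not appear in row $i$ of $R$. Then row $i$ has degree $n-s$, while symbol $\sigma$ has degree $r - \nu(\sigma)$. The hypothesis $\nu(\sigma) \ge r + s - n$ says precisely that the maximum degree of $G$ equals $n-s$, so by K\"onig's edge-colouring theorem $G$ admits a proper edge-colouring with $n-s$ colours. Labelling the colours by $s+1, \ldots, n$ and placing symbol $\sigma$ in cell $(i,j)$ whenever edge $i\sigma$ receives colour $j$, each row exhausts its palette (its degree equals the number of colours), so row $i$ of the extension is a permutation of the symbols absent from row $i$ of $R$; and each colour class is a matching, so every new column contains distinct symbols.

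I would then extend the $r \times n$ latin rectangle to an $n \times n$ latin square one row at a time. To add a row to a $k \times n$ latin rectangle with $k < n$, let $T_j$ be the set of $n-k$ symbols missing from column $j$ and seek a system of distinct representatives of the family $(T_j)$: the associated bipartite graph on columns and symbols is $(n-k)$-regular, so has a perfect matching by Hall's theorem. Iterating $n-r$ times yields a latin square completing $P$. The main obstacle is the first stage: one has to notice that Ryser's inequality is precisely the degree bound that allows K\"onig's theorem to distribute the edges of $G$ among the $n-s$ remaining columns. Once the auxiliary graph is set up correctly the rest is automatic, and the second stage is a standard induction that does not depend on the rectangle hypothesis.
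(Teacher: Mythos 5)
The paper states Ryser's theorem without proof, citing Ryser's 1951 article, so there is no in-paper argument to compare yours against; what you have written is a correct, self-contained proof, and it is essentially the classical one. Your necessity direction (counting occurrences of $\sigma$ in the four quadrants of a completion and observing that the lower-right quadrant must contain $n-r-s+\nu(\sigma) \ge 0$ copies) is exactly the standard argument. Your sufficiency direction is also sound: the inequality $\nu(\sigma) \ge r+s-n$ is precisely the statement that every symbol-vertex of your auxiliary bipartite graph has degree at most $n-s$, so K\"onig's edge-colouring theorem (Theorem~\ref{konig}) distributes the missing symbols among the $n-s$ new columns to produce an $r \times n$ latin rectangle, and the row-by-row extension of a latin rectangle to a latin square via perfect matchings in a regular bipartite graph is standard. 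It is worth noting that your first stage is the same device the paper itself deploys in the proof of Theorem~\ref{fr1} (filling the rightmost $n-s$ columns of the top $r$ rows of a realization of a balanced framework via a proper edge-colouring), so your proof fits naturally with the toolkit already assembled in Section~\ref{sec:preliminaries}; the only ingredient you use that the paper does not explicitly state is the fact that a $k$-regular bipartite graph has a perfect matching, which follows immediately from Theorem~\ref{hall2} as you indicate.
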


This set of $n$ inequalities as known as \emph{Ryser's Condition}. In this paper, we consider another condition, introduced by Hilton and Johnson, that is known as \textit{\hc}\ \cite{hiltonjohnson2}. Bobga and Johnson \cite{bobgaphd, bobgajohnson} observed that for partial latin squares where the filled cells form a rectangle, \hc\ is equivalent to Ryser's Condition.\footnote{In fact, this follows from a result of Hilton and Johnson \cite{hiltonjohnson}, who show that the $n$ inequalities of Ryser's Condition can be replaced by a single inequality. They did not state their result in terms of \hc, but they clearly realized this very quickly.}

To state \hc\, we first need some definitions. Given a partial latin square, a symbol is said to be \textit{missing} from a row (or column) if it does not appear in a filled cell of that row (or column). Given a symbol $\sigma \in \{1, \dots, n\}$, a cell is said to \textit{support} $\sigma$ if the cell either contains $\sigma$, or the cell is empty and $\sigma$ is missing from the cell's row and column. A set of cells is said to be \textit{independent} if no two of the cells belong to the same row or column; and if each cell in the set supports $\sigma$, the set is said to be an \textit{independent set for $\sigma$}.

Let $P$ be a partial latin square of order $n$. Given a set $T$ of cells of $P$, and a symbol $\sigma \in \{1, \dots, n\}$, let $\alpha(\sigma, T)$ denote the size of the largest subset of $T$ that is an independent set for $\sigma$. Then the \textit{\hil}\ for $T$, denoted \HI{T}, is the inequality
\begin{equation*}
\sum_{\sigma = 1}^n \alpha(\sigma, T) \ge \size{T}. \label{plshalls}
\end{equation*}
The partial latin square $P$ is said to satisfy \textit{\hc}\ if for each set $T$ of cells of $P$, the \hil\ \HI{T} is satisfied.\footnote{\hc\ can actually be defined in a more general setting; namely graphs whose vertices are equipped with colour lists. See \cite{hoffmanjohnson} for a survey.}

It is not hard to show that \hc\ is a \textit{necessary} condition for completability of partial latin squares (see Lemma~\ref{hconlyif}). Some time ago Cropper asked whether in fact \hc\ is a \textit{sufficient} condition \cite{cropper}. John Goldwasser provided a negative answer, giving a partial latin square (see Figure~\ref{goldwasser}) that satisfies \hc\ but is not completable \cite{ghp}. However, Cropper's question served to stimulate interest in the area, and a number of papers have appeared recently (e.g. \cite{bobgaphd, bobgajohnson, ghp, hiltonvaughan}).

\begin{figure}
\begin{center} \begin{tikzpicture}
[matrix of nodes/.style={minimum size=7mm, execute at begin cell=\node\bgroup, execute at end cell=\egroup; }]
%\fill[yellow!30] (0,0) rectangle (4.2,4.2);
\draw[color=gray,step=7mm] (0,0) grid (4.2,4.2);
\node [matrix, matrix of nodes] at (2.1,2.1) {
1 & 2 & 3 & 4 & 5 & 6 \\
3 & 6 & 1 & 2 & 4 & 5 \\
5 & 4 & 2 & 6 & 3 & 1 \\
2 & 5 &  &  &  &  \\
4 & 1 &  &  &  &  \\
6 & 3 &  &  &  &  \\ };
\end{tikzpicture} \end{center}
\caption{Goldwasser's square} \label{goldwasser} \end{figure}

As noted above, in the case of partial latin squares where the filled cells form a rectangle, \hc\ is both a necessary and sufficient condition for completability. So we have the following theorem.

\begin{theorem} Let $P$ be a partial latin square of order $n$ whose filled cells are those in the upper left $r\times s$ rectangle, for some $r,s \in \{1, \dots, n\}$. Then $P$ is completable if and only if $P$ satisfies \hc. \label{pls1} \end{theorem}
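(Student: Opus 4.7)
My plan is to reduce to Ryser's Theorem (Theorem \ref{ryser}) by showing that, for rectangle-filled partial latin squares, \hc\ is equivalent to Ryser's Condition. The ``only if'' direction is immediate from the general fact that \hc\ is necessary for completability of any partial latin square, supplied by the forthcoming Lemma \ref{hconlyif}.

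For the ``if'' direction, I would assume $P$ satisfies \hc\ and deduce $\nu(\sigma) \ge r + s - n$ for every symbol $\sigma$, after which Theorem \ref{ryser} yields completability. The key step is to choose a single test set $T$ whose \hil\ encodes all $n$ Ryser inequalities simultaneously. The natural candidate is $T$ = (all $rn$ cells of) the first $r$ rows of the array. For this $T$ I would compute $\alpha(\tau, T)$ as follows: the $\nu(\tau)$ cells of $R$ containing $\tau$ are automatically an independent set (at most one $\tau$ per row and per column); and an empty cell $(i,j)$ with $i \le r$, $j > s$ supports $\tau$ precisely when $\tau$ is missing from row $i$ of $R$, the column constraint being vacuous since columns $s+1,\ldots,n$ are entirely empty. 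A short count then gives $\alpha(\tau, T) = \nu(\tau) + \min\{r - \nu(\tau),\, n - s\}$.

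Setting $B = \{\tau : \nu(\tau) < r + s - n\}$, the minimum equals $r - \nu(\tau)$ when $\tau \notin B$ and $n - s$ when $\tau \in B$. Substituting these into $\HI{T}$ and simplifying, the \hil\ collapses to $\sum_{\tau \in B} \nu(\tau) \ge |B|(r + s - n)$. But each $\tau \in B$ satisfies $\nu(\tau) \le r + s - n - 1$, and these two bounds together force $|B| = 0$, which is precisely Ryser's Condition. The degenerate case $r + s \le n$ is vacuous, since $B$ is then automatically empty.

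The hard part is really just locating the right $T$; once one tries the ``strip of $r$ rows'' the remaining work is bookkeeping, and a transposed argument using $T$ = the first $s$ columns would work equally well. Many other natural choices give nothing useful --- for instance $T = R$ itself yields only the trivial identity $\sum_\tau \nu(\tau) = rs$ --- so the insight needed is that one must integrate the $\nu(\tau)$ counts inside $R$ against the $n - s$ empty columns, and a full strip of rows is what aligns these two contributions in the correct way.
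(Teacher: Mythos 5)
Your proposal is correct and follows essentially the same route as the paper: the same single test set (the full strip of the first $r$ rows, called $H$ in the paper), the same computation $\alpha(\sigma,H)=\min\{r,\ \nu(\sigma)+n-s\}$ (Lemma~\ref{alphasigma}), and the same reduction to Ryser's Condition and Theorem~\ref{ryser}. The only cosmetic difference is that the paper short-circuits your bookkeeping with the set $B$ by observing that $\sum_\sigma \alpha(\sigma,H)\ge rn$ together with $\alpha(\sigma,H)\le r$ forces $\alpha(\sigma,H)=r$ for every $\sigma$.
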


So while \hc\ is not, in general, a sufficient condition for a partial latin square to be completable, if we restrict our attention to partial latin squares where the filled cells form a rectangle, it is both a necessary and sufficient condition for completability. In light of Theorem~\ref{pls1} it seems sensible to ask if there are other classes of partial latin square for which \hc\ is also a sufficient condition for completability.

Bobga and Johnson considered this question, and found that \hc\ is also a sufficient condition for completability in the case of partial latin squares where the filled cells form a rectangle with one empty cell inside \cite{bobgaphd, bobgajohnson}.
%\begin{theorem} Let $P$ be a partial latin square of order $n$ whose filled cells are those in the upper left $r \times s$ rectangle, for some $r,s \in \{1, \dots, n\}$, except for one cell inside the rectangle that is empty. $P$ is completable if and only if $P$ satisfies \hc. \label{pls2} \end{theorem}
In Section \ref{sec:ryser}, we shall prove the following generalization of their result.

\begin{theorem} Let $P$ be a partial latin square of order $n$ whose filled cells are those in the upper left $r\times s$ rectangle, for some $r,s \in \{1, \dots, n\}$, except for $t$ cells in this rectangle that are empty, with the condition that there is no more than one of these empty cells in each column. Then $P$ is completable if and only if $P$ satisfies \hc. \label{pls5} \end{theorem}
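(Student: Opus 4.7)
Plan: The necessity direction of Theorem~\ref{pls5} is immediate from the fact (Lemma~\ref{hconlyif}) that \hc\ is necessary for completability, so I shall focus on sufficiency. My strategy is to reduce to Ryser's Theorem (Theorem~\ref{ryser}) by first filling in the $t$ empty cells of the rectangle. More precisely, I aim to show that, assuming $P$ satisfies \hc, there is an assignment of a symbol to each of these empty cells that turns $P$ into a partial latin square $P^{*}$ whose filled cells form the complete $r\times s$ rectangle and which satisfies Ryser's Condition; Theorem~\ref{ryser} applied to $P^{*}$ then supplies a completion of $P^{*}$, and hence of $P$.

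To set up the filling, for each symbol $\sigma$ let $\beta(\sigma) = \max\{0, r+s-n-\nu(\sigma)\}$ be the Ryser deficit of $\sigma$ in $P$. Enumerate the empty columns of the rectangle as $y_{1}, \dots, y_{t}$, with the single empty cell of column $y_{i}$ lying in row $x_{i}$, and let $M(x_{i})$, $M(y_{i})$ denote the sets of symbols missing from row $x_{i}$ and column $y_{i}$ respectively. What I need is an assignment $g$ with $g(y_{i}) \in M(x_{i}) \cap M(y_{i})$, such that $g$ is injective on each row (avoiding conflicts between two filled-in cells in the same row; the column constraint is automatic since each column contains at most one empty cell) and such that $|g^{-1}(\sigma)| \ge \beta(\sigma)$ for every $\sigma$ (ensuring Ryser's Condition for $P^{*}$).

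The existence of $g$ is a bipartite assignment problem with lower bounds on the symbol side. It can be analysed either via a defect form of Hall's Theorem or, more transparently, by a max-flow argument on the natural network with source, column-nodes $y_{i}$, row--symbol pairs $(x_{i},\sigma)$, symbol-nodes carrying demand $\beta(\sigma)$, and a sink. Feasibility is governed by a family of inequalities indexed by subsets $Y$ of empty columns together with subsets $\Sigma$ of symbols; the plan is to argue the contrapositive, assuming $g$ does not exist and extracting from the infeasibility witness a pair $(Y,\Sigma)$ whose joint demand outstrips the available supply of symbols in allowed positions.

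The main obstacle is to convert such an infeasibility witness into a set $T$ of cells of $P$ for which $\sum_{\sigma}\alpha(\sigma,T) < \size{T}$, contradicting the hypothesis that $P$ satisfies \hc. I anticipate taking $T$ to consist of the empty cells of the rectangle lying in the columns of $Y$, supplemented by suitably chosen empty cells outside the rectangle---placed in rows or columns that kill the supports of the symbols $\sigma \in \Sigma$---so as to encode the deficits $\beta(\sigma)$ with respect to the full $n\times n$ square. Computing $\alpha(\sigma,T)$ for each $\sigma$ will then hinge on the column-uniqueness hypothesis, which ensures that independent sets inside the rectangle portion of $T$ are constrained essentially by rows and so can be tracked in terms of the sets $M(x_{i})$. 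Once $g$ has been exhibited, $P^{*}$ is a partial latin square whose filled cells form the full $r\times s$ rectangle and which satisfies Ryser's Condition, so Theorem~\ref{ryser} completes the argument.
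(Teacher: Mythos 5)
Your overall strategy---reduce to Ryser's Theorem by filling the holes in the rectangle, and justify the filling by a max-flow argument whose infeasibility would contradict a \hil---is the same as the paper's, and your necessity direction and the observation that the column constraint is automatic are both fine. But there is a genuine gap at the crux. You try to do in \emph{one} network what the paper deliberately splits into three steps: the paper first produces a partial filling $Q_1$ of the holes meeting the quotas $\mu(\sigma)=\max\{0,\,r+s-n-\nu(\sigma)\}$ (via max-flow, using the inequalities \HI{H-B'} where $H$ is the whole set of cells in the top $r$ rows and $B'$ ranges over subsets of the holes), then separately produces a filling $Q_2$ of \emph{all} the holes with no quotas (via Hall's Theorem, using the inequalities \HI{B'}), and finally merges the two row by row with the Dulmage--Mendelsohn Theorem. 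Your single network carries lower bounds on both sides at once (every cell must be filled \emph{and} every symbol must meet its quota), and its feasibility condition is a mixed family of cut inequalities indexed by pairs $(Y,\Sigma)$; you do not show how such a mixed infeasibility witness yields a violated \hil, you only ``anticipate'' a construction. That conversion is the entire difficulty of the theorem, and the paper's decomposition exists precisely to avoid it.

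Moreover, the construction you sketch for the violating set $T$ points in the wrong direction. Adding empty cells outside the rectangle to $T$ cannot ``kill the supports'' of the symbols in $\Sigma$; adding cells to $T$ only increases $\size{T}$ and can only increase each $\alpha(\sigma,T)$. The way such arguments actually close is to take $T$ large enough that $\size{T}$ outruns $\sum_\sigma\alpha(\sigma,T)$: the paper takes $T=H-B'$, so that $\size{T}=rn-\size{B'}$ while $\alpha(\sigma,H-B')\le r-\mu(\sigma)+\rho'(\sigma)$, computed exactly by a dedicated lemma (Lemma~\ref{alphasigma2}) that generalizes the rectangle case. If you want to pursue your one-shot route you would need (i) an explicit Hoffman-type feasibility criterion for the network with lower bounds on both sides, and (ii) for each violated cut, an explicit set $T$ (presumably of the form $H$ minus a subset of holes, possibly intersected with a set of columns) together with an exact evaluation of $\alpha(\sigma,T)$. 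As written, the proposal does not establish sufficiency.
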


%Proofs of Theorems \ref{pls1} and \ref{pls5} will be given in Section \ref{sec:ryser}. The purpose of including a proof of Theorem~\ref{pls1} is that it serves as a template for proof of Theorem~\ref{pls5}, which is more difficult.

To the extent that Theorem~\ref{pls1} is a restatement of Ryser's Theorem (Theorem~\ref{ryser}), Theorem~\ref{pls5} can be considered as a generalization of Ryser's Theorem. It seems likely that Theorem~\ref{pls5} is not the end of the matter, and that more general results along these lines are possible. For example, one could consider partial latin squares where the filled cells form a rectangle, except for a $2 \times t$ rectangle of empty cells inside. We do not know if \hc\ is a sufficient condition for completability in this case. 

%Theorem~\ref{pls5} also generalizes a theorem of Buchanan and Ferencak \cite{buchanan}, which deals with the case where $n = s$ and all the $t$ cells are in the same row.

In Section \ref{sec:complexity}, we shall consider \hc\ and computational complexity. In this section we use some standard notions from theoretical computer science, for which we refer the reader to the book by Garey and Johnson \cite{gareyjohnson}. An obvious computational problem to study is the following.

\begin{problem} Let $P$ be a partial latin square. Decide if $P$ satisfies \hc. \label{hcprob} \end{problem}

Unfortunately we are not able to say much about the complexity of this problem. It may be the case that Problem~\ref{hcprob} is in P, but at present we cannot even show that it is in NP. In Section \ref{sec:preliminaries}, we show that it is possible to check each \hil\ in polynomial time (see Lemma~\ref{hilpoly}), and so if we have a partial latin square that does \textit{not} satisfy \hc, this fact can be verified in polynomial time. Thus Problem~\ref{hcprob} is in co-NP.

The following problem was shown to be NP-complete by Colbourn \cite{colbourn} (see also \cite{eastonparker}).

\begin{problem} Let $P$ be a partial latin square. Decide if $P$ is completable. \label{plsprob} \end{problem}

We can consider the following variant of Problem~\ref{plsprob}.

\begin{problem} Let $P$ be a partial latin square that satisfies \hc. Decide if $P$ is completable. \label{plshcprob} \end{problem}

The set of partial latin squares that are ``yes'' instances of Problem~\ref{plsprob} is the same as the set of partial latin squares that are ``yes'' instances of Problem~\ref{plshcprob}. The difference is that in Problem~\ref{plshcprob}, the input is restricted to partial latin squares that satisfy \hc. Thus Problem~\ref{plshcprob} is an example of a \textit{promise problem}, as the input partial latin square is ``promised'' to satisfy \hc. (See \cite{goldreich} for a survey of promise problems.) We shall prove the following.

\begin{theorem} Problem~\ref{plshcprob} is NP-hard. \label{plsnph} \end{theorem}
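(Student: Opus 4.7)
I plan to give a polynomial-time Karp reduction from Problem~\ref{plsprob} (unrestricted partial Latin square completability, NP-complete by Colbourn's theorem) to Problem~\ref{plshcprob}. Given an instance $P$ of Problem~\ref{plsprob} of order $n$, the task is to construct in polynomial time a partial Latin square $P'$ of some polynomially larger order $N$ so that (a) $P'$ satisfies \hc\ (meeting the promise of Problem~\ref{plshcprob}) and (b) $P'$ is completable if and only if $P$ is.

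A natural starting point is the following direct-sum construction. Place $P$ in the upper-left $n\times n$ corner of a $2n\times 2n$ grid $P'$; fill the upper-right and lower-left $n\times n$ quadrants each with a completed Latin square on the fresh symbols $\{n+1,\dots,2n\}$; and leave the lower-right quadrant empty. A straightforward row- and column-analysis shows that every completion of $P'$ restricts to a completion of $P$ on $\{1,\dots,n\}$ in the upper-left quadrant, and to some Latin square on $\{1,\dots,n\}$ (which always exists) in the lower-right quadrant; hence $P'$ is completable if and only if $P$ is, giving (b). Unfortunately (a) fails here: if $P$ violates a Hall Inequality on some set $T$ of cells of the $P$-region, then, as one checks cell-by-cell, the same $T$ witnesses a violation of \HI{T} in $P'$.

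The crux of the proof is therefore to thicken the ``scaffold'' around $P$ so that \hc\ becomes automatic in $P'$ while preserving (b). The plan is to enlarge $P'$ to an order $N$ polynomial in $n$ by introducing further rows, columns, and symbols, and filling the new cells in a carefully engineered pattern whose purpose is to provide, for every symbol $\sigma$ and every set of cells $T$ of $P'$, a large reservoir of pairwise-independent empty cells outside the $P$-region that support $\sigma$. Verifying (a) would then proceed by decomposing an arbitrary $T$ as $T_0 \cup T_1$ with $T_0$ inside the $P$-region and $T_1$ in the scaffold, and bounding $\sum_\sigma \alpha(\sigma, T)$ from below by the scaffold's contribution alone, showing this is already at least $|T_0| + |T_1| = |T|$.

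The main obstacle is exactly this verification of (a) in the case where $P$ itself violates \hc: the scaffold must be delicate enough that whatever Hall deficit $T_0$ exhibits is compensated by additional independent supports in the scaffold, while at the same time not introducing any new constraint that changes (b). If this direct padding proves too inflexible, a fallback plan is to start instead from a gadget-based NP-hard problem such as $3$-SAT, designing variable and clause gadgets whose local structure guarantees \hc\ and proving by a modular argument that the composed instance does too.
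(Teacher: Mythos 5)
There is a genuine gap here, and it sits exactly where you place your ``main obstacle'': the obstacle is not a delicate engineering issue but a structural obstruction that defeats your primary plan. Since \hc\ is necessary for completability (Lemma~\ref{hconlyif}), any instance $P$ of Problem~\ref{plsprob} that violates \hc\ is incompletable, so your reduction must send it to a $P'$ that is incompletable yet satisfies \hc. Now observe that \hc\ quantifies over \emph{all} sets of cells, including sets $T_0$ lying entirely inside the $P$-region; for such a $T_0$ the scaffold contributes nothing, and $\sum_\sigma \alpha(\sigma,T_0)$ in $P'$ is determined by which symbols the cells of $T_0$ support in $P'$. If the scaffold leaves the support sets of the $P$-region's empty cells unchanged (which is precisely what makes every completion of $P'$ restrict to a completion of $P$, i.e.\ what gives you (b)), then the violating set $T_0$ still violates $\HI{T_0}$ in $P'$. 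The only way to repair this is to make cells of the $P$-region support additional (new) symbols, which requires leaving those new symbols missing from the relevant rows and columns of the scaffold --- and then a completion of $P'$ may legitimately place new symbols inside the $P$-region, destroying (b). So your proposed verification of (a), ``bounding $\sum_\sigma \alpha(\sigma,T)$ from below by the scaffold's contribution alone,'' cannot succeed for sets contained in the $P$-region, and no padding of a general instance of Problem~\ref{plsprob} can thread this needle.

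The paper avoids the problem entirely by not reducing from Problem~\ref{plsprob}. Instead it reduces from an NP-complete hypergraph colouring problem (2-in-4 colourability of 4-regular 4-uniform hypergraphs, Theorem~\ref{kratochvil}) and builds, from each hypergraph, an L-shaped partial latin square via a ``framework'' whose admissible symbol array is designed so that \emph{every} empty cell $b$ satisfies the local weighting condition $\sum_{\sigma \in S(b)} 1/(n-\nu(\sigma)) \ge 1$ of Theorem~\ref{atleast1t}; hence every instance produced satisfies \hc, whether or not the hypergraph is colourable, while completability encodes colourability. Your fallback plan (a gadget reduction from 3-SAT with \hc\ guaranteed gadget-by-gadget) is the right genre of argument, but as stated it contains no gadgets, no criterion for certifying \hc, and no proof; the entire content of the theorem lies in that construction.
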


Theorem~\ref{plsnph} suggests that knowing that a partial latin square satisfies \hc\ may not be very helpful in determining its completability. In Section \ref{sec:complexity} we shall give a reduction from an NP-complete hypergraph colouring problem to Problem~\ref{plsprob}, with the property that its image is contained in the set of partial latin squares that satisfy \hc. From the existence of this reduction, we can deduce that Problem~\ref{plshcprob} is NP-hard, and obtain a new proof that Problem~\ref{plsprob} is NP-complete.
%In Section \ref{sec:variants} we will give some variants of Theorem~\ref{plsnph}.

%The question remains as to the proportion of partial latin squares of order $n$ satisfying \hc\ which are completable.

The structure of this paper is as follows. In Section~\ref{sec:preliminaries} we prove some basic results and quote some classical theorems. In Section~\ref{sec:ryser} we give the proof of Theorem~\ref{pls5} and in Section~\ref{sec:complexity} we give the proof of Theorem~\ref{plsnph}.

\section{Preliminaries} \label{sec:preliminaries}

In this section we give some basic results that will be needed later. First we give the standard result that \hc\ is a necessary condition for a partial latin square to be completable.

\begin{lemma} Let $P$ be a partial latin square of order $n$. Then $P$ is completable only if $P$ satisfies \hc. \label{hconlyif} \end{lemma}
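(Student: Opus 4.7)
The plan is to fix a completion $L$ of $P$, and use the symbols that $L$ places in the cells of $T$ to construct, for each $\sigma$, an independent set for $\sigma$ of the appropriate size. The Hall Inequality $\HI{T}$ will then follow by counting.

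More precisely, suppose $P$ is completable and let $L$ be a completion. For an arbitrary set $T$ of cells and each symbol $\sigma \in \{1,\dots,n\}$, let $S_\sigma$ be the set of cells in $T$ that contain $\sigma$ in $L$. First I would check that $S_\sigma$ is independent: since $L$ is a latin square, $\sigma$ appears at most once in each row and each column of $L$, so no two cells of $S_\sigma$ share a row or column.

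Next I would verify that every cell of $S_\sigma$ supports $\sigma$ in the partial latin square $P$. There are two cases for a cell $c \in S_\sigma$. If $c$ is filled in $P$, then since $L$ extends $P$ and $c$ contains $\sigma$ in $L$, the cell $c$ contains $\sigma$ in $P$ as well, and so $c$ supports $\sigma$. If $c$ is empty in $P$, then $\sigma$ must be missing from both the row and column of $c$ in $P$: for if $\sigma$ appeared in some filled cell of that row or column in $P$, this occurrence would persist in $L$, contradicting the fact that $L$ places $\sigma$ in $c$ and is a latin square. Hence $c$ supports $\sigma$ in this case too. Consequently $S_\sigma$ is an independent set for $\sigma$ contained in $T$, and $|S_\sigma| \le \alpha(\sigma, T)$.

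Finally, since each cell of $T$ is filled in $L$ with exactly one symbol, the sets $S_1,\dots,S_n$ partition $T$, giving $\sum_{\sigma=1}^{n} |S_\sigma| = |T|$. Combining this with the previous bound yields $\sum_{\sigma=1}^{n} \alpha(\sigma,T) \ge |T|$, which is $\HI{T}$. As $T$ was arbitrary, $P$ satisfies \hc. The argument is essentially bookkeeping; there is no real obstacle, and the only point that needs care is the second case above, where one must use the latin property of $L$ to rule out prior occurrences of $\sigma$ in the row or column of $c$.
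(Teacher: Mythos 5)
Your proof is correct and follows essentially the same route as the paper's: partition $T$ according to the symbol each cell receives in the completion, observe each part is an independent set for its symbol, and sum. The extra verification that each cell of $S_\sigma$ actually supports $\sigma$ in $P$ is a detail the paper leaves implicit, and it is right to include it.
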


\begin{proof} Let $P^*$ be a completion of $P$, and let $T$ be a set of cells of $P^*$. For each symbol $\sigma \in \{1, \dots, n\}$, let $T_\sigma$ be the subset of the cells of $T$ that contain $\sigma$. Summing over all symbols, we have $T_1 + \dots + T_n = \size{T}$. As each set of cells $T_\sigma$ is an independent set, we have $\alpha(\sigma, T) \ge \size{T_\sigma}$. Therefore
\[ \sum_{\sigma=1}^n \alpha(\sigma, T) \ge  \sum_{\sigma=1}^n \size{T_\sigma} = \size{T}, \] and so \HI{T} holds. Since this is true for all sets $T$, it follows that $P$ satisfies \hc. \end{proof}

We also need the following standard lemma.

\begin{lemma} Let $P$ be a partial latin square of order $n$, $T$ a set of cells, and $f \in T$ a filled cell. Then \HI{T} holds if and only if \HI{T-f} holds. \label{t-f} \end{lemma}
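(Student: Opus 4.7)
Let $f$ be the filled cell and let $\tau \in \{1,\dots,n\}$ be the symbol it contains; say $f$ lies in row $r$ and column $c$. The plan is to compare the two sides of the Hall Inequality term by term and show that each side drops by exactly $1$ when $f$ is removed.

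First I would handle the easy symbols. For any $\sigma \neq \tau$, the cell $f$ cannot support $\sigma$: it is not empty, and it does not contain $\sigma$. Hence every independent set for $\sigma$ contained in $T$ avoids $f$, which gives $\alpha(\sigma, T) = \alpha(\sigma, T - f)$.

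The crux is the term for $\sigma = \tau$. Here I would argue that $\alpha(\tau, T) = \alpha(\tau, T - f) + 1$ by showing that every maximum independent set for $\tau$ in $T$ must contain $f$. The key observation is that, since $P$ is a partial latin square and $\tau$ already appears at $f$ in row $r$, the symbol $\tau$ is not missing from row $r$; thus no empty cell of row $r$ supports $\tau$, and $f$ is the unique cell of row $r$ supporting $\tau$. The same holds for column $c$. Now let $I \subseteq T$ be a maximum independent set for $\tau$. If $f \notin I$, then no cell of $I$ lies in row $r$ or column $c$ (since those would have to equal $f$), so $I \cup \{f\}$ is still independent and still supports $\tau$ cell-by-cell, contradicting maximality. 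Hence $f \in I$, and $I - f$ is an independent set for $\tau$ in $T-f$ of size $|I| - 1$; conversely any independent set for $\tau$ in $T-f$ is one in $T$. This gives the equality $\alpha(\tau, T-f) = \alpha(\tau, T) - 1$.

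Combining these, $\sum_{\sigma=1}^n \alpha(\sigma, T) = \sum_{\sigma=1}^n \alpha(\sigma, T-f) + 1$, while trivially $|T| = |T-f| + 1$. Subtracting $1$ from both sides of \HI{T} therefore yields exactly \HI{T-f}, and vice versa, so the two inequalities are equivalent. The only delicate step is the argument that $f$ is forced into every maximum independent set for $\tau$; once the row/column uniqueness of $\tau$'s support is noted, the rest is bookkeeping.
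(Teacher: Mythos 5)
Your proof is correct and takes essentially the same approach as the paper: both isolate the term for the symbol $\tau$ contained in $f$, observe that $f$ is the only cell in its row and column supporting $\tau$, and conclude that $\sum_\sigma \alpha(\sigma,T)$ drops by exactly $1$ when $f$ is removed. The only difference is cosmetic: you establish the exact equality $\sum_\sigma \alpha(\sigma,T)=\sum_\sigma \alpha(\sigma,T-f)+1$ once and use it for both directions, whereas the paper uses a one-sided inequality for the forward direction and the exact equality only for the converse.
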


\begin{proof} Assume that \HI{T} holds. As the cell $f$ is filled, it only supports one symbol, and so it can only contribute 1 to the quantity
\[\sum_{\sigma=1}^n \alpha(\sigma, T), \]
and so
\[ \sum_{\sigma=1}^n \alpha(\sigma, T-f) \ge \left( \sum_{\sigma=1}^n \alpha(\sigma, T) \right) - 1 \ge \size{T} - 1 = \size{T-f}, \]
and so \HI{T-f} holds. Conversely, suppose that \HI{T-f} holds, and that $f$ contains a symbol $\sigma \in \{1, \dots, n\}$. As no other cells of $T$ in the same row or column as $f$ support $\sigma$, the size of a maximum independent set of cells of $T$ that support $\sigma$ is exactly one greater than the size of such a set in $T-f$. So
\[ \sum_{\sigma=1}^n \alpha(\sigma, T) = \left( \sum_{\sigma=1}^n \alpha(\sigma, T-f) \right) + 1 \ge \size{T-f} + 1 = \size{T}, \]
and so \HI{T} holds. \end{proof}

Because of Lemma~\ref{t-f}, if one wishes to determine if a partial latin square satisfies \hc, it is sufficient to verify that the \hil\ for each set of \textit{empty} cells is satisfied. So we have the following theorem.

\begin{theorem} Let $P$ be a partial latin square. $P$ satisfies \hc\ if and only if the \hil\ is satisfied by each set of empty cells.
\label{empty} \end{theorem}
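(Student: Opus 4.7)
The forward direction is immediate from the definition: if $P$ satisfies Hall's Condition, then \HI{T} holds for every set $T$ of cells of $P$, so in particular it holds whenever $T$ consists solely of empty cells.

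For the converse, the plan is to reduce an arbitrary set $T$ of cells to its ``empty part'' by repeated application of Lemma~\ref{t-f}. Given an arbitrary $T$, let $F \subseteq T$ be the filled cells of $T$ and let $E = T \setminus F$. Enumerate the filled cells as $f_1, f_2, \dots, f_k$, and apply Lemma~\ref{t-f} inductively: the lemma gives \HI{T} iff $\HI{T - f_1}$, then $\HI{T - f_1}$ iff $\HI{T - f_1 - f_2}$, and so on, until after $k$ steps we have \HI{T} iff \HI{E}. By hypothesis \HI{E} holds, since $E$ is a set of empty cells, and so \HI{T} holds. Because $T$ was arbitrary, $P$ satisfies Hall's Condition.

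There is no real obstacle here; Lemma~\ref{t-f} does all the work, and the only thing to verify is that the iterative stripping of filled cells is legitimate, which is just a finite induction on $\size{F}$. The proof is essentially a one-line consequence of Lemma~\ref{t-f}.
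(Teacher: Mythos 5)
Your proof is correct and follows exactly the route the paper intends: the paper derives Theorem~\ref{empty} as an immediate consequence of Lemma~\ref{t-f}, stripping filled cells one at a time, which is precisely your finite induction on $\size{F}$. Nothing is missing.
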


A \textit{vertex cover} of a graph $G$ is a set of vertices $C$, such that $C$ contains at least one vertex of each edge of $G$. We shall need the following classical theorem.

\begin{theorem} \emph{(K\"onig--Egerv\'ary, 1931 \cite{bipartite, bondymurty, diestel, konig})} Let $G$ be a bipartite graph. The size of a maximum matching in $G$ is equal to the size of a minimum vertex cover. \label{konig-egervary} \end{theorem}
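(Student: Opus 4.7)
The plan is to prove both inequalities between the maximum matching number $\nu(G)$ and the minimum vertex cover number $\tau(G)$. One direction, $\nu(G) \le \tau(G)$, holds for arbitrary graphs: any vertex cover must contain at least one endpoint of each matching edge, and distinct matching edges share no endpoints, so any vertex cover has size at least $\nu(G)$. This needs no bipartiteness and will occupy about two lines.

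For the reverse inequality $\tau(G) \le \nu(G)$, I would construct, from a given maximum matching $M$, an explicit vertex cover $C$ with $\size{C} \le \size{M}$. Let $A$ and $B$ be the two parts of $G$, and let $U \subseteq A$ be the set of $M$-unsaturated vertices in $A$. Let $Z$ be the set of all vertices reachable from $U$ by an $M$-alternating path (beginning with a non-matching edge, then matching, then non-matching, and so on). I would then define
\[ C = (A \setminus Z) \cup (B \cap Z). \]
The two facts to verify are that $C$ is a vertex cover, and that $\size{C} \le \size{M}$.

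For the vertex cover property, suppose for contradiction that some edge $ab$ with $a\in A$, $b\in B$ has neither endpoint in $C$; then $a\in Z$ and $b\notin Z$. If $ab\notin M$, extending the alternating path reaching $a$ by the edge $ab$ shows $b\in Z$, a contradiction. If $ab\in M$, then $a$ is matched, so the alternating path to $a$ must end with the matching edge $ab$, forcing $b\in Z$, again a contradiction. For the size bound, I would show that every vertex of $C$ is $M$-saturated and that no matching edge has both endpoints in $C$, so that the map sending $v\in C$ to the unique $M$-edge incident with $v$ is injective into $M$. Vertices of $A\setminus Z$ are matched because $U\subseteq Z$, and vertices of $B\cap Z$ are matched because otherwise the alternating path to such a vertex would be augmenting, contradicting maximality of $M$. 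Finally, if $ab\in M$ with $a\in A\setminus Z$ and $b\in B\cap Z$, the alternating path to $b$ must arrive via a non-matching edge and then continue along $ba$, placing $a$ in $Z$, a contradiction.

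The delicate step is the argument that no unmatched vertex of $B$ lies in $Z$, because this is exactly where the maximality of $M$ (via the absence of an augmenting path) is used; the rest of the verification is routine bookkeeping about the structure of alternating paths in bipartite graphs. An alternative route, which I would keep in reserve, is to derive the theorem from Hall's marriage theorem applied to the bipartite graph induced by the complement of a candidate cover, but the alternating-path construction above is more self-contained.
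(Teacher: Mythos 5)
Your proof is correct, but note that the paper does not prove this statement at all: it is quoted as a classical theorem (K\"onig--Egerv\'ary) with citations to standard references, so there is no in-paper argument to compare against. Your argument is the standard alternating-path proof: the easy inequality (a cover meets every matching edge in a distinct vertex) plus the construction of the cover $(A \setminus Z) \cup (B \cap Z)$ from the set $Z$ of vertices reachable from the unsaturated vertices of $A$ by $M$-alternating paths. All the key verifications are present and sound --- in particular you correctly isolate the one place where maximality of $M$ is used, namely that no unsaturated vertex of $B$ can lie in $Z$ (else there would be an augmenting path), and you correctly rule out a matching edge having one end in $A \setminus Z$ and the other in $B \cap Z$. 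The only point worth making explicit is why the map from $C$ into $M$ is injective: two vertices of $C$ on a common matching edge would have to lie on opposite sides of the bipartition, hence one in $A \setminus Z$ and one in $B \cap Z$, which is exactly the case you exclude. This is a complete and self-contained proof of the theorem as stated.
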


It can be easily verified that Goldwasser's square (see Figure~\ref{goldwasser}) is incompletable. To see that it satisfies \hc, we can use the following lemma, which will also prove useful in Section \ref{sec:complexity}.

\begin{lemma} Let $P$ be a partial latin square. Suppose that a symbol $\sigma$ is missing from $k$ columns and $k$ rows of $P$ but is present in all the other rows and columns of $P$, and that we have a set of empty cells $T$ which contains $t$ cells that support $\sigma$. Then $\alpha(\sigma, T) \ge \ceiling{t/k}$. \label{toverk} \end{lemma}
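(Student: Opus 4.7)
The plan is to reformulate the question as a matching/cover problem on a bipartite graph and apply the K\"onig--Egerv\'ary theorem (Theorem~\ref{konig-egervary}).

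First I would observe that every cell of $T$ that supports $\sigma$ must lie simultaneously in one of the $k$ rows from which $\sigma$ is missing and in one of the $k$ columns from which $\sigma$ is missing, since a supporting cell is empty and $\sigma$ is missing from its row and column. Therefore the $t$ supporting cells live inside a $k \times k$ sub-array indexed by the $k$ bad rows and the $k$ bad columns.

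Next, I would build a bipartite graph $G$ whose two parts are these $k$ rows and these $k$ columns, with an edge joining row $x$ to column $y$ whenever the cell $(x,y)$ lies in $T$ and supports $\sigma$. Then $G$ has exactly $t$ edges, and a matching in $G$ corresponds precisely to an independent set of cells of $T$ supporting $\sigma$. So $\alpha(\sigma, T)$ equals the size of a maximum matching of $G$, and by Theorem~\ref{konig-egervary} it equals the size of a minimum vertex cover of $G$.

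Finally, I would bound the minimum vertex cover from below. Any vertex cover $C$ of $G$ consists of some rows and columns which together contain all $t$ edges. Each row (resp.\ column) of $G$ is incident with at most $k$ edges, because $G$ lives in a $k \times k$ grid. Hence $|C| \cdot k \ge t$, giving $|C| \ge \lceil t/k \rceil$. Combining the two displays, $\alpha(\sigma, T) \ge \lceil t/k \rceil$ as required. The argument is essentially a one-step application of K\"onig--Egerv\'ary once the bipartite setup is in place, so there is no real obstacle; the only thing to be careful about is the initial observation that the supporting cells are confined to a $k \times k$ sub-array, which is what makes the per-vertex degree bound of $k$ valid.
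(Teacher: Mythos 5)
Your proof is correct and follows essentially the same route as the paper's: the same bipartite graph on the $k$ deficient rows and $k$ deficient columns with the supporting cells as edges, the same appeal to the K\"onig--Egerv\'ary theorem, and the same degree-at-most-$k$ bound forcing any vertex cover to have size at least $\lceil t/k\rceil$. No changes needed.
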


\begin{proof} Let $G$ be the bipartite graph on $2k$ vertices, defined as follows. There are $k$ vertices $r_1, \dots, r_k$, representing the rows from which $\sigma$ is missing, and $k$ vertices $c_1, \dots, c_k$, representing the columns from which $\sigma$ is missing. For each cell of $T$ that supports $\sigma$, we place an edge between the vertex that represents the cell's row, and the vertex that represents the cell's column. Thus matchings in $G$ correspond to independent sets of cells in $P$. By Theorem~\ref{konig-egervary}, the size of a maximum matching in $G$ is equal to the size of the smallest vertex cover. The maximum degree of $G$ is at most $k$, so a set of $s$ vertices is incident with at most $sk$ edges. Since $G$ has $t$ edges, a vertex cover of $G$ must contain at least $\ceiling{t/k}$ vertices. Hence there is a matching in $G$ of size $\ceiling{t/k}$, and therefore $\alpha(\sigma, T) \ge \ceiling{t/k}$. \end{proof}

Note that in the preceding proof we observed that $\alpha(\sigma, T)$ is equal to the size of a maximum matching in a certain bipartite graph. So we can determine if \HI{T} holds by computing the size of a maximum matching in $n$ bipartite graphs, one for each choice of symbol $\sigma$. Since the size of a maximum matching in a bipartite graph on $N$ vertices can be determined in $O(N^3)$ time (see e.g. \cite[Chapter 20]{schrijver}), we have the following result.

\begin{lemma} Let $P$ be a partial latin square, and $T$ a set of cells. Then \HI{T} can be determined in $O(n^4)$ time. \label{hilpoly}
\end{lemma}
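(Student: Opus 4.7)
The plan is to make precise the observation already sketched in the paragraph preceding the lemma: for each fixed symbol $\sigma$, the quantity $\alpha(\sigma, T)$ is just the size of a maximum matching in a small bipartite graph built from $P$ and $T$. For each $\sigma \in \{1, \dots, n\}$ I would construct a bipartite graph $G_\sigma$ on vertex set $R_\sigma \cup C_\sigma$, where $R_\sigma = \{r_1, \dots, r_n\}$ represents the rows of $P$ and $C_\sigma = \{c_1, \dots, c_n\}$ represents the columns, and I would add an edge $r_i c_j$ exactly when the cell $(i,j)$ lies in $T$ and supports $\sigma$. By the definition of ``independent'', a collection of cells of $T$ is independent if and only if no two of them share a row or column, which is exactly the condition for the corresponding edges of $G_\sigma$ to form a matching. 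Thus independent sets for $\sigma$ contained in $T$ are in bijection with matchings in $G_\sigma$, and $\alpha(\sigma, T)$ equals the maximum matching size of $G_\sigma$.

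Next I would bound the running time. As a one-off preprocessing step, I would scan $P$ and compute, for each row and each column, the list of symbols that are missing; this takes $O(n^2)$ time. With this information, for each $\sigma$ I can build $G_\sigma$ in $O(n^2)$ time by running through the (at most $n^2$) cells of $T$ and testing in constant time whether each supports $\sigma$. Running a standard bipartite-matching algorithm on $G_\sigma$, which has $2n$ vertices, then takes $O(n^3)$ time, as cited from Schrijver in the paper. Summing over the $n$ choices of $\sigma$ gives $\sum_{\sigma=1}^n \alpha(\sigma, T)$ in $O(n \cdot n^3) = O(n^4)$ time, and a final comparison of this sum with $|T|$ decides whether $\HI{T}$ holds.

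There is essentially no obstacle here: the whole content of the proof is the equivalence between independent sets of cells supporting $\sigma$ and matchings in $G_\sigma$, which is immediate from the definitions, together with an off-the-shelf polynomial-time bipartite matching algorithm. The main thing to be careful about is simply packaging the preprocessing of ``missing symbols'' so that the per-symbol cost genuinely stays at $O(n^3)$ rather than being inflated by repeatedly recomputing which cells support $\sigma$.
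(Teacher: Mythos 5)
Your proposal is correct and follows exactly the approach the paper uses: for each symbol $\sigma$ you reduce computing $\alpha(\sigma, T)$ to a maximum bipartite matching on $O(n)$ vertices (as the paper observes in the proof of Lemma~\ref{toverk}), run an $O(n^3)$ matching algorithm $n$ times, and compare the sum with $\size{T}$. The only difference is cosmetic --- you use all $n$ rows and columns as vertices rather than just those from which $\sigma$ is missing, and you spell out the preprocessing of missing-symbol lists, neither of which changes the argument or the bound.
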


In Goldwasser's square, each symbol is missing from 2 rows and 2 columns, and each empty cell supports 2 symbols. Let $T$ be a set of empty cells of Goldwasser's square. For each $i \in \{1, \dots, 6\}$, let $a_i$ be the number of cells of $T$ that support $i$. As each cell of $T$ supports 2 symbols we have \begin{equation} \sum_{i=1}^6 a_i = 2\size{T} \label{2T} \end{equation} But then \begin{align*}
\sum_{i=1}^6 \alpha(i, T) &\ge \sum_{i=1}^6 \ceiling{a_i / 2} &&\text{(by Lemma~\ref{toverk})} \\
&\ge \tfrac{1}{2} \sum_{i=1}^6 a_i \\ &= \size{T}, &&\text{(by (\ref{2T}))} 
\end{align*} and so \HI{T} holds. By Theorem~\ref{empty}, \hc\ holds if the \hil\ holds for each set of empty cells. Since this holds for all sets $T$, it follows that Goldwasser's square satisfies \hc. The following theorem formalizes this argument.

\begin{theorem} Let $P$ be a partial latin square of order $n$. For all $\sigma \in \{1, \dots, n\}$, let  $\nu(\sigma)$ denote the number of times that $\sigma$ appears in $P$. For each empty cell $b$ of $P$ we let $S(b)$ denote the set of symbols supported by $b$. Suppose we have
\begin{equation}  \sum_{\sigma \in S(b)} \frac{1}{n - \nu(\sigma)} \ge 1 \; \; \text{for each empty cell $b$ of $P$.} \label{atleast1} \end{equation}
Then $P$ satisfies \hc. \label{atleast1t} \end{theorem}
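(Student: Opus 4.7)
The plan is to reduce to checking Hall's Inequality for sets of empty cells (using Theorem~\ref{empty}), and then apply Lemma~\ref{toverk} term-by-term and sum over the cells.

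First I would fix a set $T$ of empty cells of $P$ and, for each symbol $\sigma \in \{1, \dots, n\}$, let $a_\sigma$ denote the number of cells of $T$ that support $\sigma$. Since $\sigma$ appears exactly $\nu(\sigma)$ times in $P$, it is missing from exactly $n - \nu(\sigma)$ rows and $n - \nu(\sigma)$ columns (here we may safely ignore any $\sigma$ with $\nu(\sigma) = n$, since such a $\sigma$ is missing from no row or column and therefore satisfies $a_\sigma = 0$ and $\sigma \notin S(b)$ for every empty cell $b$). Setting $k = n - \nu(\sigma)$, Lemma~\ref{toverk} gives
\[ \alpha(\sigma, T) \;\ge\; \left\lceil \frac{a_\sigma}{n - \nu(\sigma)} \right\rceil \;\ge\; \frac{a_\sigma}{n - \nu(\sigma)}. \]

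Next I would sum this over all symbols and interchange the order of summation. Because a cell $b \in T$ contributes to $a_\sigma$ precisely when $\sigma \in S(b)$, we have
\[ \sum_{\sigma = 1}^n \frac{a_\sigma}{n - \nu(\sigma)} \;=\; \sum_{b \in T} \sum_{\sigma \in S(b)} \frac{1}{n - \nu(\sigma)} \;\ge\; \sum_{b \in T} 1 \;=\; \size{T}, \]
where the inequality uses hypothesis (\ref{atleast1}). Combining this with the per-symbol bound yields $\sum_{\sigma = 1}^n \alpha(\sigma, T) \ge \size{T}$, which is \HI{T}.

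Finally, since $T$ was an arbitrary set of empty cells, Theorem~\ref{empty} lets me conclude that $P$ satisfies \hc. There is no real obstacle here: the proof is essentially a formalisation of the Goldwasser-square calculation carried out just before the statement, with the constant $2$ replaced by the symbol-dependent quantity $n - \nu(\sigma)$. The only point that requires a little care is treating the symbols with $\nu(\sigma) = n$, which is handled by the observation above.
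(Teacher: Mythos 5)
Your proof is correct and follows essentially the same route as the paper's: reduce to sets of empty cells via Theorem~\ref{empty}, bound each $\alpha(\sigma,T)$ below by $\size{T_\sigma}/(n-\nu(\sigma))$ using Lemma~\ref{toverk}, and swap the order of summation to invoke hypothesis (\ref{atleast1}). Your explicit handling of symbols with $\nu(\sigma)=n$ is a small point of extra care that the paper leaves implicit.
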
	

\begin{proof} By Theorem~\ref{empty}, \hc\ holds if the \hil\ holds for each set of empty cells. Let $T$ be a set of empty cells of $P$. For each symbol $\sigma \in \{1, \dots, n\}$, let $T_\sigma$ denote the subset of $T$ consisting of the cells that support $\sigma$. Then
\begin{align*}
\sum_{\sigma = 1}^n \alpha(\sigma, T) &\ge
%\sum_{\sigma = 1}^n \ceiling{\frac{\size{T[\sigma]}}{n - \nu(\sigma)}} \\
\sum_{\sigma = 1}^n \frac{\size{T_\sigma}}{n - \nu(\sigma)} &&\text{(by Lemma~\ref{toverk})} \\
&= \sum_{b \in T} \sum_{\sigma \in S(b)} \frac{1}{n - \nu(\sigma)} \\
&\ge \sum_{b \in T} 1 &&\text{(by (\ref{atleast1}))} \\
&= \size{T},
\end{align*} and so \HI{T} holds.
\end{proof}

Finally, we state four classical results that will be needed in due course.

\begin{theorem} \emph{(Hall, 1936 \cite{bipartite, bondymurty, diestel, hall})} Let $G$ be a bipartite graph with bipartition $(A, B)$. There is a matching in $G$ which covers $A$ if and only if each subset $A' \subseteq A$ has at least $\size{A'}$ neighbours. \label{hall2} \end{theorem}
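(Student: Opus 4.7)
The plan is to prove Hall's Theorem by induction on $|A|$. The necessity direction is immediate: if $M$ is a matching covering $A$, then for any $A' \subseteq A$ the set $N(A')$ contains the $|A'|$ vertices that $M$ matches $A'$ to, so $|N(A')| \ge |A'|$. All the work is in the sufficiency direction, so I would state ``Hall's condition'' as the hypothesis $|N(A')| \ge |A'|$ for every $A' \subseteq A$ and aim to construct a matching saturating $A$.

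The base case $|A| = 1$ is trivial, since Hall's condition applied to $A$ itself supplies a neighbour of the single vertex. For the inductive step I would split into two cases depending on whether Hall's condition holds with ``slack'' everywhere, or is tight somewhere.

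\textbf{Case 1:} every nonempty proper subset $A' \subsetneq A$ satisfies $|N(A')| \ge |A'| + 1$. Pick any vertex $a \in A$ and any neighbour $b \in N(\{a\})$, match $a$ to $b$, and consider the bipartite graph $G' = G - a - b$ with parts $A \setminus \{a\}$ and $B \setminus \{b\}$. For any $A'' \subseteq A \setminus \{a\}$, the neighbourhood of $A''$ in $G'$ has lost at most the single vertex $b$, so $|N_{G'}(A'')| \ge |N_G(A'')| - 1 \ge |A''|$ by the strict inequality. Hall's condition therefore holds in $G'$, and induction produces a matching saturating $A \setminus \{a\}$, which combined with the edge $ab$ saturates $A$.

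\textbf{Case 2:} there is a nonempty proper subset $A^* \subsetneq A$ with $|N(A^*)| = |A^*|$. Apply induction to the bipartite graph on $(A^*, N(A^*))$ (Hall's condition is inherited from $G$ since neighbourhoods there are the same as in $G$) to obtain a matching $M_1$ saturating $A^*$. Now consider the graph $G'$ on $(A \setminus A^*, B \setminus N(A^*))$. I would verify that Hall's condition still holds in $G'$: for $A'' \subseteq A \setminus A^*$, if we had $|N_{G'}(A'')| < |A''|$, then $N_G(A'' \cup A^*) \subseteq N_{G'}(A'') \cup N(A^*)$ would give $|N_G(A'' \cup A^*)| < |A''| + |A^*| = |A'' \cup A^*|$, contradicting Hall's condition in $G$. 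Induction then yields a matching $M_2$ saturating $A \setminus A^*$, and $M_1 \cup M_2$ saturates $A$. The main subtlety, and the only nonroutine step, is the verification of Hall's condition for $G'$ in Case 2, which relies on the tightness $|N(A^*)| = |A^*|$ to convert a hypothetical deficiency in $G'$ into a deficiency in $G$.
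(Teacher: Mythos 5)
Your proof is correct: it is the classical Halmos--Vaughan induction argument, splitting on whether Hall's condition is tight on some nonempty proper subset of $A$, and both cases are handled properly (in particular the key verification in Case 2, that a deficient set $A''$ in $G'$ would make $A'' \cup A^*$ deficient in $G$, is exactly right). Note, however, that the paper does not prove this statement at all: Theorem~\ref{hall2} is quoted as a classical result with references, so there is no proof to compare against. The only route the paper gestures at is a different one --- it remarks that Hall's Theorem is commonly deduced from the Max-flow Min-cut Theorem (Theorem~\ref{maxflow}), and that this deduction inspired the flow argument in the proof of Theorem~\ref{pls5}. Your inductive proof is more elementary and self-contained, needing no flow machinery; the max-flow route has the advantage of fitting the toolkit the paper actually deploys, since a violating cut there plays the same role as your tight set $A^*$. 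Either way, your argument is complete and would serve as a valid proof of the quoted theorem.
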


\begin{theorem} \emph{(Dulmage and Mendelsohn, 1958 \cite{bipartite, dulmagemendelsohn})} Let $G$ be a bipartite graph with bipartition $(A, B)$ and suppose $M_1$ and $M_2$ are two matchings in $G$. Then there is a matching $M \subseteq M_1 \cup M_2$ such that $M$ covers all the vertices of $A$ covered by $M_1$ and all the vertices of $B$ covered by $M_2$. \label{dulmage} \end{theorem}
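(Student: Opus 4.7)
The plan is to analyze the structure of $M_1 \cup M_2$ and build $M$ component by component. Since $M_1$ and $M_2$ are both matchings, each vertex has degree at most one in each, and therefore degree at most two in $M_1 \cup M_2$ (viewing an edge of $M_1 \cap M_2$ as a ``double'' edge so that both endpoints use up their allotment in a single component). Hence the connected components of $M_1 \cup M_2$ are of four simple types: isolated vertices, edges of $M_1 \cap M_2$, even cycles whose edges alternate between $M_1$ and $M_2$, and paths whose edges alternate between $M_1$ and $M_2$.

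For each component I will specify the edges to add to $M$. Isolated vertices contribute nothing, and a common edge $e \in M_1 \cap M_2$ is placed into $M$ (covering both its endpoints). For each alternating even cycle I may take either its $M_1$-edges or its $M_2$-edges; either choice covers every vertex of the cycle, and every such vertex is covered by both $M_1$ and $M_2$, so the required endpoint conditions are trivially met.

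The only delicate case is that of an alternating path $P = v_0 e_1 v_1 \cdots e_k v_k$. Every internal vertex of $P$ is covered by both $M_1$ and $M_2$, so taking either $P \cap M_1$ or $P \cap M_2$ covers all internal vertices; the issue is whether the endpoints $v_0$ and $v_k$ end up covered as required. Note that $v_0$ is covered in $M_1$ iff $e_1 \in M_1$ (and analogously for $v_k$ and $e_k$), so the only endpoints that we are \emph{required} to cover are those $A$-endpoints whose incident path-edge lies in $M_1$, together with those $B$-endpoints whose incident path-edge lies in $M_2$. A short case analysis on the bipartition classes of $v_0, v_k$ and the parity of $k$ resolves matters. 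When $v_0$ and $v_k$ lie in different parts of the bipartition, $k$ is odd and $e_1, e_k$ belong to the same matching, and taking that matching's edges from $P$ covers both endpoints; when $v_0$ and $v_k$ lie in the same part, $k$ is even and $e_1, e_k$ belong to different matchings, in which case exactly one endpoint needs to be covered and the correct choice of $M_1$-edges or $M_2$-edges does the job.

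Once the choices for each component are combined, $M$ is a matching (different components share no vertices, and within a component we only take edges of a single matching), $M \subseteq M_1 \cup M_2$ by construction, and the endpoint-covering requirements are satisfied. The main obstacle is simply ensuring that the case analysis on paths is exhaustive and transparent; the rest of the argument is bookkeeping.
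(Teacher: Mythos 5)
Your proof is correct. Note, however, that the paper does not prove this statement at all: it is quoted as a classical theorem of Dulmage and Mendelsohn, with the proof deferred to the cited references, so there is no internal argument to compare yours against. What you give is the standard component-decomposition proof: since each vertex has degree at most one in each of $M_1$ and $M_2$, the components of $M_1\cup M_2$ are isolated vertices, doubled edges of $M_1\cap M_2$ (which, as you implicitly use, can only occur as components consisting of that single edge, since a common edge saturates both matchings at both endpoints), alternating even cycles, and alternating paths; the covering requirement for any vertex can be met inside its own component because its $M_1$- or $M_2$-edge lies in that component. Your case analysis on the paths is exhaustive and the parity argument identifying which of $e_1,e_k$ lie in which matching is right, so the construction does yield a matching $M\subseteq M_1\cup M_2$ covering every $A$-vertex saturated by $M_1$ and every $B$-vertex saturated by $M_2$. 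The only cosmetic remark is that for the alternating cycles and the odd-length paths you cover strictly more vertices than required, which is harmless since the theorem only asks for a superset of the prescribed vertices to be covered.
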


For a graph $G$, the maximum degree will be denoted $\Delta(G)$ and the chromatic index will be denoted $\chi'(G)$.

\begin{theorem} \emph{(K\"onig, 1916 \cite{bipartite, bondymurty, diestel, konig})} Let $G$ be a bipartite graph. Then $\chi'(G) = \Delta(G)$. \label{konig} \end{theorem}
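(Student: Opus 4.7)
The lower bound $\chi'(G)\ge\Delta(G)$ is immediate, since the $\Delta(G)$ edges incident to any vertex of maximum degree must receive pairwise distinct colours. My plan for the reverse inequality $\chi'(G)\le\Delta(G)$ is induction on $\Delta:=\Delta(G)$, using Hall's Theorem (Theorem~\ref{hall2}) to peel off one perfect matching at a time from a regular bipartite supergraph of $G$; the base $\Delta=0$ is trivial.

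For the inductive step I would first embed $G$ into a $\Delta$-regular bipartite graph $G'$ with the same maximum degree. Let $(A,B)$ be the bipartition of $G$; pad the smaller side with isolated vertices so that $|A|=|B|$. Now iteratively augment: while some vertex has degree less than $\Delta$, the identity $\sum_{a\in A}\deg(a)=\sum_{b\in B}\deg(b)$ combined with $|A|=|B|$ forces at least one low-degree vertex on each side, and we may add a new edge between such a pair (possibly parallel to an existing one). After finitely many steps we obtain a $\Delta$-regular bipartite multigraph $G'\supseteq G$. (If one insists on staying within simple graphs, take sufficiently many disjoint copies of $G$ first and perform the augmentation by joining low-degree vertices in distinct copies, which prevents parallel edges.)

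Next I would show $G'$ contains a perfect matching. For any $S\subseteq A$, all $\Delta|S|$ edges leaving $S$ land in $N(S)$, and $N(S)$ is incident to at most $\Delta|N(S)|$ edges of $G'$, so $|N(S)|\ge|S|$. Hall's Theorem then produces a matching $M$ saturating $A$, and since $|A|=|B|$ this $M$ is perfect. Assign $M$ the colour $\Delta$ and apply the inductive hypothesis to the $(\Delta-1)$-regular bipartite graph $G'-M$ to colour its edges with $\{1,\dots,\Delta-1\}$; together these give a proper $\Delta$-edge-colouring of $G'$, whose restriction to $E(G)$ is the desired colouring.

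The only delicate point is the embedding step: one has to check that the augmentation really terminates and yields a $\Delta$-regular bipartite object, and, in the simple-graph formulation, that multi-edges can be avoided. The degree-sum argument handles the multigraph version with no effort, and the disjoint-copies trick disposes of the simple-graph version, so no serious obstacle arises; the rest of the argument is a direct application of Hall's Theorem plus induction.
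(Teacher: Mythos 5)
The paper does not prove this statement at all: Theorem~\ref{konig} is quoted as a classical result with references, so there is no internal proof to compare against. Your argument is the standard ``regularize and peel off perfect matchings'' proof of K\"onig's edge-colouring theorem, and it is correct. The lower bound is right, the degree-sum argument does force a deficient vertex on each side whenever the multigraph is not yet $\Delta$-regular, the augmentation terminates since the total degree is bounded by $2\Delta\size{A}$, and the counting $\Delta\size{S}\le\Delta\size{N(S)}$ verifies Hall's condition so that a perfect matching can be extracted and the induction closes. Two small points worth making explicit if you write this up fully: (i) the inductive hypothesis must be stated for bipartite \emph{multigraphs}, since $G'-M$ is in general a multigraph even when $G$ is simple (this costs nothing --- Hall's Theorem applies to the underlying simple graph and the counting argument is unaffected by multiplicities, and the colouring of the multigraph restricts to one of $G$); and (ii) the disjoint-copies device for staying within simple graphs needs the bipartition of the second copy to be swapped before joining a deficient vertex to its clone, so that the union remains bipartite --- but as you note, the multigraph route makes this detour unnecessary. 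An alternative standard proof, by induction on the number of edges using an alternating-path (Kempe chain) recolouring, avoids the regularization step entirely, but your route has the advantage of using only Hall's Theorem, which the paper already quotes as Theorem~\ref{hall2}.
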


The final theorem, of Ford and Fulkerson, is commonly known as the \textit{Max-flow Min-cut Theorem}. An \textit{integral flow} is a flow in which the flow along each edge is an integer.

\begin{theorem} \emph{(Ford and Fulkerson, 1956 \cite{bondymurty, diestel, fordfulkerson})} Let $G$ be a directed graph with integral edge capacities and two distinguished vertices $\alpha$ (the ``source'') and $\omega$ (the ``sink''). Then the size of a maximum flow between $\alpha$ and $\omega$ is equal to the minimum size of a cut that separates these two vertices. Moreover, there is a maximum flow that is integral. \label{maxflow} \end{theorem}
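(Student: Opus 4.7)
The plan is to prove both parts of the statement, namely the equality $\max\text{-flow} = \min\text{-cut}$ and the existence of an integral maximum flow, simultaneously via the Ford--Fulkerson augmenting path method, in three steps.

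First I would establish weak duality: for any feasible flow $f$ and any cut $(S,T)$ with $\alpha \in S$ and $\omega \in T$, the value $|f|$ equals the net flow out of $S$, which is bounded above by $\sum_{u \in S,\, v \in T} c(u,v) = \mathrm{cap}(S,T)$, since flow on $S$-to-$T$ edges is at most capacity and flow on $T$-to-$S$ edges is nonnegative. Hence every flow value is at most every cut capacity; in particular $\max\text{-flow} \le \min\text{-cut}$.

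Next I would run Ford--Fulkerson augmentation. Starting from the zero flow, at each step build the residual graph $G_f$: for each arc $(u,v)$ of $G$ include a forward residual arc of capacity $c(u,v) - f(u,v)$ (when positive) and a backward residual arc of capacity $f(u,v)$ (when positive). If some directed $\alpha$-$\omega$ path $P$ exists in $G_f$, augment $f$ along $P$ by $\delta$, the minimum residual capacity on $P$. Since capacities are integers and we start from zero flow, $\delta \ge 1$ at every step and the flow stays integral throughout. Because $|f|$ strictly increases by at least $1$ per iteration and is bounded above by $\sum_{v} c(\alpha, v)$, the process terminates at an integral flow $f^{*}$ whose residual graph contains no directed $\alpha$-$\omega$ path.

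Finally I would exhibit a cut matching the value of $f^{*}$. Let $S$ be the set of vertices reachable from $\alpha$ in $G_{f^{*}}$ and $T = V \setminus S$; the termination condition forces $\omega \in T$. Any original arc $(u,v)$ with $u \in S$, $v \in T$ must be saturated (else its forward residual arc would put $v$ into $S$), and any original arc $(v,u)$ with $v \in T$, $u \in S$ must carry zero flow (else its backward residual arc would put $v$ into $S$). Thus the net flow across $(S,T)$ equals $\mathrm{cap}(S,T)$, giving $|f^{*}| = \mathrm{cap}(S,T)$. Combined with weak duality this shows $f^{*}$ is a maximum flow, $(S,T)$ is a minimum cut, and the two coincide in value; the integrality of $f^{*}$ is built into its construction. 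The main obstacle is really only in the middle step: for arbitrary real capacities the augmentation procedure need not terminate (there are classical examples using irrational capacities), so the integrality hypothesis is genuinely used to guarantee $\delta \ge 1$ and hence termination in finitely many steps.
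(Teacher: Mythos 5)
Your proof is correct: weak duality, termination of the augmenting-path procedure under integral capacities, and the reachability cut in the final residual graph together give both the max-flow min-cut equality and the existence of an integral maximum flow, and you rightly flag that integrality of the capacities is what guarantees termination. Note, however, that the paper does not prove this statement at all --- it is quoted as a classical theorem of Ford and Fulkerson with references to the literature --- so there is no proof in the paper to compare against; what you have written is the standard augmenting-path argument found in the cited texts, and it is sound.
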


It is a common student exercise to deduce Theorem~\ref{hall2} (Hall's Theorem) from Theorem~\ref{maxflow}. We shall use a method in the proof of Theorem~\ref{pls5} that was inspired by this exercise.

\section{A generalization of Ryser's Theorem} \label{sec:ryser}

Theorem~\ref{pls1} states that Ryser's Condition is equivalent to \hc. In this section, we give a proof of Theorem~\ref{pls1}, in the same spirit as that given by Bobga and Johnson \cite{bobgaphd, bobgajohnson}. The proof will serve as a template for the more difficult proof of Theorem~\ref{pls5}. In the proof we show that Ryser's Condition is in fact equivalent to a single \hil: \HI{H}, where $H$ is the set of cells in the top $r$ rows.
%a fact observed by Bobga and Johnson \cite{bobgajohnson}.
To prove Theorem~\ref{pls1} we need the following lemma.

\begin{lemma} Let $P$ be a partial latin square of order $n$ whose filled cells are those in the upper left $r\times s$ rectangle $R$, for some $r,s \in \{1, \dots, n\}$. Let $H$ be the set of cells in the top $r$ rows, and for each symbol $\sigma \in \{1, \dots, n\}$ let $\nu(\sigma)$ denote the number of times that $\sigma$ appears in $R$. Then
\[ \alpha(\sigma, H) = \min\{r, \ \nu(\sigma)+n-s\}. \]
\label{alphasigma} \end{lemma}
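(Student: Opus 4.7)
The plan is to compute $\alpha(\sigma, H)$ directly by classifying the cells of $H$ that support $\sigma$ and exploiting the fact that the two natural groups occupy disjoint sets of columns.

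First I would identify the $\sigma$-supporting cells of $H$. Since $P$'s filled cells all lie in $R$, the columns $s+1,\dots,n$ are entirely empty, so $\sigma$ is missing from each of those columns. Split the $r$ rows of $H$ into the $\nu(\sigma)$ \emph{full rows} (those in which $\sigma$ occurs in $R$) and the $r-\nu(\sigma)$ \emph{missing rows} (those from which $\sigma$ is absent in $R$). In a full row the only $\sigma$-supporting cell of $H$ is the one in $R$ containing $\sigma$; in a missing row the $\sigma$-supporting cells of $H$ are exactly the $n-s$ empty cells in columns $s+1,\dots,n$.

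Now I would observe that type (a) cells (containing $\sigma$ in $R$) lie in columns $1,\dots,s$, while type (b) cells (empty, in missing rows) lie in columns $s+1,\dots,n$; the column sets are disjoint. Hence an independent set $I$ for $\sigma$ in $H$ decomposes as $I = I_a \cup I_b$ with $I_a$ of type (a), $I_b$ of type (b), and the column-distinctness constraints on $I_a$ and $I_b$ are independent. The $\nu(\sigma)$ cells of type (a) already form an independent set, because in the partial latin square $R$ no symbol repeats in a column. For type (b), every one of the $r-\nu(\sigma)$ missing rows is fully available in each of the $n-s$ right-hand columns, so the corresponding bipartite support graph is complete bipartite; its maximum matching has size $\min\{r-\nu(\sigma),\,n-s\}$.

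Combining, taking all of type (a) is optimal (each added cell uses up a row that could not have been used by type (b) anyway, since type (b) requires missing rows), and we obtain
\[
\alpha(\sigma, H) = \nu(\sigma) + \min\{r-\nu(\sigma),\,n-s\} = \min\{r,\ \nu(\sigma)+n-s\}.
\]
There is no real obstacle here: the key conceptual point is simply the column-disjointness of the two types of supporting cells, which turns the problem into a trivial matching in a complete bipartite graph. The only thing to be mildly careful about is verifying that including all $\nu(\sigma)$ type (a) cells is never suboptimal, which is immediate because replacing a type (a) cell by a type (b) cell converts a full row into a missing row but does not free a column usable by the other type.
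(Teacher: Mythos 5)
Your proof is correct and follows essentially the same route as the paper: both identify the $\nu(\sigma)$ cells containing $\sigma$ and the empty cells in the $n-s$ right-hand columns of the $\sigma$-missing rows, and combine them into an independent set of size $\nu(\sigma)+\min\{r-\nu(\sigma),\,n-s\}=\min\{r,\ \nu(\sigma)+n-s\}$. The only cosmetic difference is in the matching upper bound, where the paper simply observes that an independent set for $\sigma$ uses at most $r$ rows and at most $\nu(\sigma)+n-s$ columns, while you derive optimality from the row- and column-disjointness of the two types of supporting cells; both are valid.
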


\begin{proof} Fix a symbol $\sigma \in \{1, \dots, n\}$. Let $S_1$ be the set of cells in $R$ that contain $\sigma$. $S_1$ is an independent set of size $\nu(\sigma)$. In $H$ there are $n-s$ columns with empty cells, and $\sigma$ is missing from $r-\nu(\sigma)$ rows. From the cells in these $n-s$ columns and $r-\nu(\sigma)$ rows we can select an independent set $S_2$ of size $\min\{r-\nu(\sigma), \ n-s\}$. Then $S = S_1 \cup S_2$ is an independent set of size $\min\{r, \ \nu(\sigma)+n-s\}$. So $\alpha(\sigma, H) \ge \min\{r, \ \nu(\sigma)+n-s\}$. Moreover, $\alpha(\sigma, H) \le r$ as there are $r$ rows in $H$; also $\alpha(\sigma, H) \le \nu(\sigma)+n-s$ as no independent set for $\sigma$ can use more than $\nu(\sigma)+n-s$ columns. So in fact we have $\alpha(\sigma, H) = \min\{r, \ \nu(\sigma)+n-s\}$. \end{proof}

We can now prove Theorem~\ref{pls1}.

\begin{proof}[Proof of Theorem~\ref{pls1}] If $P$ is completable then it satisfies \hc\ by Lemma~\ref{hconlyif}. Conversely, suppose $P$ satisfies \hc. Then \HI{H} holds, which means that
\[ \sum_{\sigma = 1}^n \alpha(\sigma, H) \ge rn,\]
which can only be the case if $\alpha(\sigma, H) = r$ for each $\sigma \in \{1, \dots, n\}$. By Lemma~\ref{alphasigma} we have $\nu(\sigma) + n - s \ge r$ for each $\sigma \in \{1, \dots, n \}$, and so $P$ satisfies Ryser's Condition. Thus $P$ is completable by Theorem~\ref{ryser}. \end{proof}

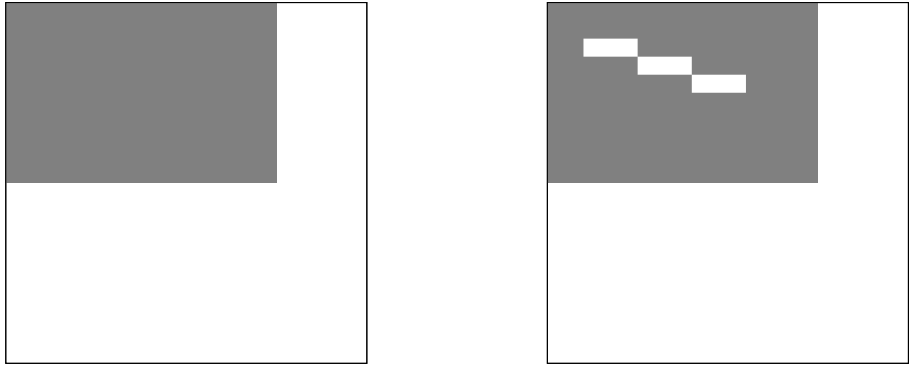
\begin{figure}
\begin{center} \begin{tikzpicture}
\begin{scope}[scale=1.2]
%\fill[draw=black,yellow!40] (0,-1) rectangle (4,3);
\draw[black] (0,-1) rectangle (4,3);
\fill[gray] (0,1) rectangle (3,3);
\draw[black] (0,-1) rectangle (4,3);
\begin{scope}[xshift=6cm]
%\fill[yellow!30] (0,-1) rectangle (4,3);
\draw[black] (0,-1) rectangle (4,3);
\fill[gray] (0,1) rectangle (3,3);
%\fill[yellow!30] (0.4,2.4) rectangle +(0.6,0.2)
\fill[white] (0.4,2.4) rectangle +(0.6,0.2)
(1,2.2) rectangle +(0.6,0.2)
(1.6,2) rectangle +(0.6,0.2);
\draw[black] (0,-1) rectangle (4,3);
\end{scope}
\end{scope}
\end{tikzpicture} \end{center}
\caption{The shapes of partial latin square considered in Theorems \ref{pls1} and \ref{pls5}.} \label{shapes12}
\end{figure}

The following lemma is a generalization of Lemma~\ref{alphasigma}, and is essential to the proof of Theorem~\ref{pls5}.

\begin{lemma} Let $P$ be a partial latin square of order $n$ whose filled cells are all in the upper left $r\times s$ rectangle $R$, for some $r,s \in \{1, \dots, n\}$, although at most one cell in each column inside the rectangle may be empty. Let $J$ be a subset of these empty cells. Let $H$ be the set of cells in the top $r$ rows, and for each $\sigma \in \{1, \dots, n\}$ let $\nu(\sigma)$ denote the number of times that $\sigma$ appears in $R$, and let $\rho(\sigma)$ be the number of rows in which there is an empty cell in $R-J$ that supports $\sigma$. Then
\[ \alpha(\sigma, H-J) = \min\{r, \ \nu(\sigma)+\rho(\sigma)+n-s\}. \]
\label{alphasigma2} \end{lemma}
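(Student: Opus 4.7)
The plan is to follow the proof of Lemma~\ref{alphasigma} almost verbatim, but now with three species of $\sigma$-supporting cell in $H-J$ rather than two: (a) the $\nu(\sigma)$ filled cells of $R$ containing $\sigma$; (b) the empty cells of $R-J$ that miss $\sigma$ in both their row and their column; and (c) the empty cells in the rightmost $n-s$ columns of $H$, every one of which automatically lies in a column missing $\sigma$ (those columns are entirely empty) and supports $\sigma$ exactly when its row misses $\sigma$. The lower bound will come from exhibiting an independent set drawing on each species, and the upper bound from a quick count using the structural constraints on $R$.

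For the lower bound I would build $S=S_1\cup S_2\cup S_3$ as follows. Let $S_1$ be all $\nu(\sigma)$ type-(a) cells, occupying a row-set $R_1$ and a column-set $C_1$ of size $\nu(\sigma)$ each. Let $R_2$ be the set of $\rho(\sigma)$ rows containing an empty cell of $R-J$ that supports $\sigma$, and form $S_2$ by choosing one such cell from each row of $R_2$. The critical observation is that $S_2$ is automatically independent: by hypothesis each column of $R$ contains at most one empty cell, so two cells chosen from distinct rows cannot share a column. Since rows in $R_2$ miss $\sigma$, they are disjoint from $R_1$; and the columns used by $S_2$ all miss $\sigma$, so they are disjoint from $C_1$. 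Hence $S_1\cup S_2$ is independent of size $\nu(\sigma)+\rho(\sigma)$. Finally, take $S_3$ in the last $n-s$ columns, using any $\min\{r-\nu(\sigma)-\rho(\sigma),\,n-s\}$ rows of $H$ that miss $\sigma$ and lie outside $R_2$. This gives an independent set of size $\min\{r,\,\nu(\sigma)+\rho(\sigma)+n-s\}$, as required.

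For the upper bound, any independent set for $\sigma$ in $H-J$ is trivially bounded by $r$ (one cell per row of $H$), and is bounded by $\nu(\sigma)+\rho(\sigma)+(n-s)$ because its cells of types (a), (b), (c) lie in pairwise-disjoint row-sets of sizes $\nu(\sigma)$, at most $\rho(\sigma)$, and at most $n-s$ respectively. The middle bound uses that only rows in $R_2$ contribute type-(b) cells and that an independent set takes at most one cell per row; the last uses that there are only $n-s$ columns beyond $R$.

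The only point requiring more than routine care is the independence of $S_2$, and this is precisely what the ``at most one empty cell per column'' hypothesis delivers; without it one would have to solve a bipartite matching problem instead. Once that subtlety is dispatched, the argument is the same bookkeeping as in Lemma~\ref{alphasigma}.
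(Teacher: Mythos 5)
Your proof is correct and follows essentially the same route as the paper's: an explicit independent set $S_1\cup S_2\cup S_3$ giving the lower bound (the paper merely merges your $S_1$ and $S_2$ into one set, asserting without comment the independence that you carefully derive from the one-empty-cell-per-column hypothesis), and a row/column count for the upper bound. The extra verification of $S_2$'s independence is a welcome clarification rather than a deviation.
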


\begin{proof} Fix a symbol $\sigma \in \{1, \dots, n\}$. Let $S_1$ be the set of cells in $H$ that contain $\sigma$, plus one empty cell from $R-J$ that supports $\sigma$ from each row that contains such a cell. $S_1$ is an independent set of size $\nu(\sigma) + \rho(\sigma)$. We can find an independent set $S_2$ of size $\min\{r-\nu(\sigma)-\rho(\sigma), \ n-s\}$ from the cells in the rightmost $n-s$ columns that are in the $r - \nu(\sigma) - \rho(\sigma)$ rows that have no cells in $S_1$. Then $S = S_1 \cup S_2$ is an independent set for $\sigma$ of size $\min\{r, \ \nu(\sigma) + \rho(\sigma) + n - s\}$. So $\alpha(\sigma, H-J) \ge \min\{r, \ \nu(\sigma)+ \rho(\sigma) + n - s\}$. Moreover, $\alpha(\sigma, H-J) \le r$ as there are $r$ rows in $H$; also $\alpha(\sigma, H-J) \le \nu(\sigma) + \rho(\sigma) + n - s$ as no independent set for $\sigma$ can use more than $\nu(\sigma) + \rho(\sigma) + n - s$ columns. So in fact we have $\alpha(\sigma, H-J) = \min\{r, \ \nu(\sigma) + \rho(\sigma) + n - s\}$. \end{proof}

We can now prove Theorem~\ref{pls5}.

\begin{proof}[Proof of Theorem~\ref{pls5}] If $P$ is completable then it satisfies \hc\ by Lemma~\ref{hconlyif}. Conversely, suppose $P$ satisfies \hc.

Let $H$ be the set of $rn$ cells in the first $r$ rows of $P$, and let $B$ be the set of empty cells contained within $R$. We shall give a procedure for finding a completion of $P$. The procedure consists of three steps, which we now outline.

\begin{enumerate}[{Step} 1.]
\item A partial latin square $Q_1$ is constructed from $P$, by filling \textit{some} of the cells of $B$, in such a way that each symbol appears at least $r + s - n$ times within $R$. This step will be shown to be possible because the \hil\ \HI{H-B'} holds for each subset $B' \subseteq B$. The main tool used to show this will be Theorem~\ref{maxflow} (the Max-flow Min-cut Theorem).

\item A partial latin square $Q_2$ is constructed from $P$ by filling \textit{all} the cells of $B$. This step will be performed entirely independently to Step 1; $Q_2$ will not depend on $Q_1$ in any way. For $Q_2$ there will be no conditions put on the number of times each symbol must appear in $R$. This step will be shown to be possible because the \hil\ \HI{B'} holds for each subset $B' \subseteq B$. The main tool used to show this will be Theorem~\ref{hall2} (Hall's Theorem).

\item A partial latin square $Q$ is constructed from $Q_1$ and $Q_2$, in which all the cells of $B$ are filled, and for which Ryser's Condition holds. The main tool used will be Theorem~\ref{dulmage} (the Dulmage-Mendelsohn Theorem). It follows from Theorem~\ref{ryser} (Ryser's Theorem) that $Q$ is completable.

\end{enumerate}

Note that we shall not need to make use of the fact that \textit{all} the Hall Inequalities are satisfied; it will suffice to make use of those inequalities that are of the form \HI{B'} or \HI{H-B'} for some $B' \subseteq B$.

\begin{center} Step 1. \end{center}

In this step, we shall describe a procedure for constructing a partial latin square $Q_1$ from $P$ by filling in \textit{some} of the cells of $B$ in such a way that each symbol $\sigma \in \{1, \dots, n\}$ appears at least $r + s - n$ times in $R$.

By Lemma~\ref{alphasigma2}, first with $J = B$, and second with $J = \emptyset$, we have
\[ \alpha(\sigma, H-B) = \min \{r, \ \nu(\sigma) + n - s\} \]
and
\[ \alpha(\sigma, H) = \min \{r, \ \nu(\sigma) + \rho(\sigma) + n - s\}, \] where $\rho(\sigma)$ is the number of rows in which $\sigma$ is supported by a cell of $B$. \HI{H} implies that for each $\sigma \in \{1, \dots, n\}$ we have $\alpha(\sigma, H) = r$, and therefore
\[ \nu(\sigma) + \rho(\sigma) + n - s \ge r. \]

For each symbol $\sigma \in \{1, \dots, n\}$ let $\mu(\sigma)$ be the least non-negative integer such that
\begin{equation} \nu(\sigma) + \mu(\sigma) \ge r + s - n. \label{nplusm} \end{equation}
Thus if $\sigma$ already appears at least $r + s - n$ times in $R$ we have $\mu(\sigma) = 0$; otherwise we have $\mu(\sigma) > 0$ and
\begin{equation} \nu(\sigma) + \mu(\sigma) = r + s - n. \label{nplusmeq} \end{equation}

Note that for each $\sigma \in \{1, \dots, n\}$ we have
\begin{equation} \mu(\sigma) \le \rho(\sigma). \label{mger} \end{equation}

Let
\[ u = \sum_{\sigma = 1}^n \mu(\sigma). \]
If we can fill in $u$ cells of $B$ using each symbol $\sigma \in \{1, \dots, n\}$ $\mu(\sigma)$ times, then in the resulting partial latin square, by (\ref{nplusm}), each symbol will appear at least $r + s - n$ times in $R$, and we will have the required partial latin square $Q_1$. We shall now show that this can always be done.

If a symbol $\sigma$ has $\mu(\sigma) = 0$ then no copies of this symbol need be placed in $B$. In fact, we only need be concerned with those symbols $\sigma$ for which $\mu(\sigma) > 0$; and for these symbols equation (\ref{nplusmeq}) holds.

Consider a directed graph $G$ with edge-capacities, that has vertex set $U \cup X \cup B \cup \{\alpha, \omega\}$ where
\[ U = \{ \sigma : \sigma \in \{1, \dots, n\} \text{\ and\ } \mu(\sigma) > 0\}, \]
\[ X = \{ (\sigma, w) : w \in \{1, \dots, r\} \text{\ and $\sigma$ is supported by at least one empty cell in row $w$} \} ,\] and $\alpha$ and $\omega$ are two additional vertices, which can be thought of as a ``source'' and a ``sink''. The source will have zero in-degree and the sink will have zero out-degree; we shall consider cuts in $G$ that separate $\alpha$ from $\omega$.

In $G$, $\alpha$ is joined to each vertex $\sigma \in U$ by an edge of capacity $\mu(\sigma)$, the direction being from $\alpha$ to $\sigma$. Each vertex $\sigma \in U$ is joined to all the vertices in $X$ of the form $(\sigma, w)$ for some $w \in \{1, \dots, r\}$ by edges of capacity 1, the direction being from $U$ to $X$. Each vertex $(\sigma, w) \in X$ is joined to all $b \in B$ where $b$ is a cell in row $w$ that supports $\sigma$, with edges of capacity $u$, the direction being from $X$ to $B$. Each vertex in $B$ is joined to $\omega$ by an edge of capacity 1, the direction being from $B$ to $\omega$. (See Figure~\ref{pls5fig}.)

\begin{figure}
\begin{center} \begin{tikzpicture}[scale=0.8,decoration={markings, mark=at position 0.66 with {\arrow{latex}}}]

%\fill[blue!10] (0,2)--(3,3.5)--(6,4)--(9,3.7)--(12,2)--(9,0.3)--(6,0)--(3,0.5);
\draw[black, postaction={decorate}] (0,2)--(2.7,3);
\draw[black, postaction={decorate}] (0,2)--(2.7,1);
\draw[black, postaction={decorate}] (0,2)--(2.7,2);
\draw[black, postaction={decorate}] (3.3,1)--(5.7,0.5);
\draw[black, postaction={decorate}] (3.3,2)--(5.7,2);
\draw[black, postaction={decorate}] (3.3,3)--(5.7,3.5);
\draw[black, postaction={decorate}] (6.3,0.5)--(8.7,0.8);
\draw[black, postaction={decorate}] (6.3,2)--(8.7,2);
\draw[black, postaction={decorate}] (6.3,3.5)--(8.7,3.2);

\draw[black, postaction={decorate}] (9.3,0.8)--(12,2);
\draw[black, postaction={decorate}] (9.3,2)--(12,2);
\draw[black, postaction={decorate}] (9.3,3.2)--(12,2);
%\draw[draw=black,fill=yellow!40, rounded corners=7] (2.7,0.5) rectangle (3.3,3.5);
%\draw[draw=black,fill=red!40, rounded corners=7] (5.7,0) rectangle (6.3,4);
%\draw[draw=black,fill=blue!40, rounded corners=7] (8.7,0.3) rectangle (9.3,3.7);
\draw[draw=black, rounded corners=7] (2.7,0.5) rectangle (3.3,3.5);
\draw[draw=black, rounded corners=7] (5.7,0) rectangle (6.3,4);
\draw[draw=black, rounded corners=7] (8.7,0.3) rectangle (9.3,3.7);

\draw (0,2) node[left] {$\alpha$} (12,2) node[right] {$\omega$};
\fill (0,2) circle (2pt) (12,2) circle (2pt);
\draw (3,2) node {$U$};
\draw (6,2) node {$X$};
\draw (9,2) node {$B$};
\end{tikzpicture} \end{center}
\caption{The directed graph $G$ from Theorem~\ref{pls5}.} \label{pls5fig}
\end{figure}
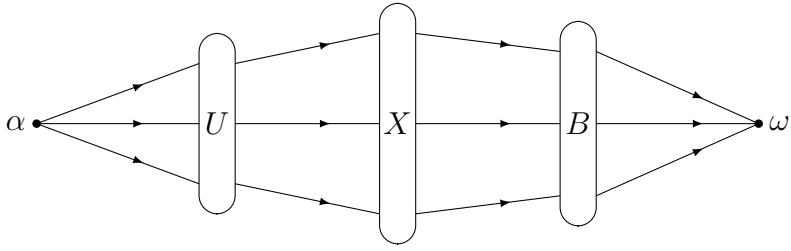

\vspace{5pt}

\begin{claim} It is possible to fill some of the cells of $B$ using each symbol $\sigma \in U$ $\mu(\sigma)$ times if and only if there is a $u$-flow in $G$ from $\alpha$ to $\omega$. \end{claim}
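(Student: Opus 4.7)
The plan is to establish the claim by exhibiting a correspondence between partial fillings of $B$ that use each symbol $\sigma \in U$ exactly $\mu(\sigma)$ times, and integral $\alpha$--$\omega$ flows of value $u$ in $G$. Theorem~\ref{maxflow} guarantees that a maximum flow may be taken to be integral, so it suffices to work with integral flows throughout.

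For the forward direction, given such a partial filling, I would send one unit of flow along the path $\alpha \to \sigma \to (\sigma,w) \to b \to \omega$ for every cell $b \in B$ that has been filled with symbol $\sigma$, where $w$ denotes the row of $b$. Summing over all placements yields total flow $\sum_{\sigma \in U} \mu(\sigma) = u$ out of $\alpha$. The capacity constraints are straightforward to check: the edge $\alpha \to \sigma$ carries exactly $\mu(\sigma)$ units; the edge $\sigma \to (\sigma,w)$ carries at most $1$ unit because $\sigma$ can appear at most once in row $w$ of a partial latin square; the edges from $X$ to $B$ have capacity $u$ and so never bind; and each edge $b \to \omega$ carries at most $1$ unit since each cell is filled at most once.

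For the backward direction, I would invoke Theorem~\ref{maxflow} to fix an integral flow $f$ of value $u$. Since the total capacity of edges leaving $\alpha$ is $\sum_{\sigma \in U} \mu(\sigma) = u$, every such edge must be saturated, and so $\mu(\sigma)$ units of flow pass through each vertex $\sigma \in U$. Decomposing $f$ into unit paths, each path $\alpha \to \sigma \to (\sigma,w) \to b \to \omega$ is read as the instruction to place symbol $\sigma$ in cell $b$. The capacity of $\sigma \to (\sigma,w)$ being $1$ prevents $\sigma$ being placed twice in row $w$; the capacity of $b \to \omega$ being $1$ prevents $b$ being filled twice; and $b$ supports $\sigma$ by construction of the edge $(\sigma,w) \to b$.

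The one subtle point, which I expect to be the main (albeit small) obstacle, is that $G$ contains no explicit mechanism to forbid placing the same symbol twice in the same column. This is where the hypothesis of Theorem~\ref{pls5} silently enters: because each column of $R$ contains at most one empty cell, any two distinct cells of $B$ automatically lie in distinct columns, and so the column constraint comes for free. The verification of the claim thus reduces to routine flow-decomposition bookkeeping; the substantive content of Step~1 is deferred to the next stage of the argument, in which the existence of a $u$-flow will need to be extracted from the Hall Inequalities \HI{H-B'} by a min-cut argument, and it is there that Theorem~\ref{maxflow} will do the genuine work.
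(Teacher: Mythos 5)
Your proposal is correct and follows essentially the same route as the paper: unit flows along $\alpha \to \sigma \to (\sigma,w) \to b \to \omega$ for each placement in one direction, and in the other direction integrality via Theorem~\ref{maxflow}, saturation of the source edges, and the capacity-$1$ edges between $U$ and $X$ enforcing the row constraint. Your observation that the column constraint is handled automatically because $B$ meets each column at most once (and each $b$ supports the symbol placed in it) is a detail the paper leaves implicit, and it is correct.
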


\begin{proof} Suppose there is such a partial filling of $B$. For each instance that a symbol $\sigma \in U$ is placed in cell $b \in B$ of row $w$, we can create a 1-flow in $G$ from $\alpha$ to $\omega$ by sending a flow of 1 from $\alpha$ to $\sigma \in U$ to $(\sigma, w) \in X$ to $b \in B$ to $\omega$. The sum of all these 1-flows gives a $u$-flow from $\alpha$ to $\omega$.

Conversely, suppose that there is a $u$-flow from $\alpha$ to $\omega$. By Theorem~\ref{maxflow} (the Max-flow Min-cut Theorem) there is such a flow that is integral. All the edges from $\alpha$ to $U$ must carry a flow equal to their capacity, and there must be $u$ paths from $U$ to $\omega$ each carrying 1-flows. These flows indicate how the cells of $B$ can be filled. The fact that the edges between $U$ and $X$ have capacity $1$ ensures that each symbol is placed at most once in each row. This proves Claim~1. \renewcommand{\qedsymbol}{} \end{proof}

By Claim~1, to show that $Q_1$ can be constructed, it will suffice to show that there is a $u$-flow in $G$ from $\alpha$ to $\omega$. So suppose, for a contradiction, that there does not exist such a $u$-flow in $G$. Then by Theorem~\ref{maxflow} (the Max-flow Min-cut Theorem) there must be a cut $T$ in $G$ of size less than $u$ that separates $\alpha$ from $\omega$. $T$ cannot contain any of the edges between $X$ and $B$ as each of these edges has capacity $u$. Also $T$ cannot contain \textit{all} the edges between $\alpha$ and $U$ (or it would have size $u$).

Let $U' \subseteq U$ be the set of vertices that $\alpha$ is joined to with an edge that is not in $T$. (See Figure~\ref{pls5fig2}.) For each $\sigma \in U'$ let $c(\sigma)$ be the number of edges joining $\sigma$ to $X$ that are in $T$. We may suppose that for all $\sigma \in U'$ we have $c(\sigma) < \mu(\sigma)$ as otherwise we can create a new cut $T'$ from $T$, where $\size{T'} \le \size{T}$, by removing the edges joining $\sigma$ to $X$ and adding the edge joining $\alpha$ to $\sigma$ instead.

Let $B' \subseteq B$ be the set of vertices in $B$ that are joined to vertices in $U'$ by paths containing no edges of $T$. Because the cut $T$ separates $\alpha$ from $\omega$ all the edges that join $B'$ to $\omega$ must be in $T$. Since we have assumed that $T$ has size less than $u$ we must have
\[ \sum_{\sigma \in U - U'} \mu(\sigma) + \sum_{\sigma \in U'} c(\sigma) + \size{B'} < u = \sum_{\sigma \in U} \mu(\sigma), \]
so that
\begin{equation} \sum_{\sigma \in U'} \left( \mu(\sigma) - c(\sigma) \right) > \size{B'}. \label{keyineq} \end{equation}

By Lemma~\ref{alphasigma2} we have for each $\sigma \in U$,
\begin{align*} \alpha(\sigma, H-B') &= \min\{r, \ \nu(\sigma) + \rho'(\sigma) + n - s\} \\
&\le \nu(\sigma) + \rho'(\sigma) + n - s,
\end{align*}
where $\rho'(\sigma)$ is the number of rows in which $\sigma$ is supported by a cell in $B - B'$. Then by (\ref{nplusmeq}) we have
\begin{equation}
\alpha(\sigma, H-B') \le r - \mu(\sigma) + \rho'(\sigma). \label{nurhodash}
\end{equation}

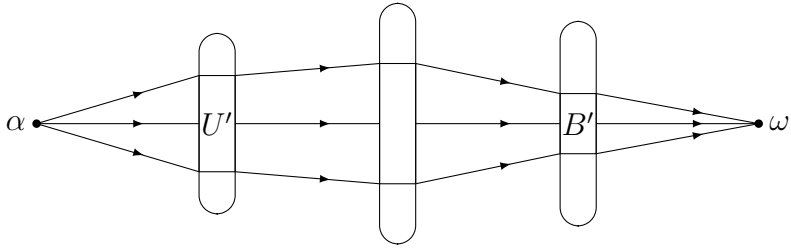
\begin{figure}
\begin{center} \begin{tikzpicture}[scale=0.8,decoration={markings, mark=at position 0.66 with {\arrow{latex}}}]

%\fill[blue!5] (0,2)--(3,3.5)--(6,4)--(9,3.7)--(12,2)--(9,0.3)--(6,0)--(3,0.5);
%\fill[blue!10] (0,2)--(2.7,2.8)--(3.3,2.8)--(5.7,3)--(6.3,3)--(8.7,2.5)--(9.3,2.5)--(12,2)--(9.3,1.5)--(8.7,1.5)--(6.3,1)--(5.7,1)--(3.3,1.2)--(2.7,1.2);

%\draw[thick, gray, ->, >=latex, shorten >= 3mm] (0,2)--(3,1.2);
%\draw[thick, gray, ->, >=latex, shorten >= 3mm] (0,2)--(3,2.8);
%\draw[thick, gray, ->, >=latex, shorten >= 3mm] (3,1.2)--(6,1);
%\draw[thick, gray, ->, >=latex, shorten >= 3mm] (3,2.8)--(6,3);
%\draw[thick, gray, ->, >=latex, shorten >= 3mm] (6,1)--(9,1.5);
%\draw[thick, gray, ->, >=latex, shorten >= 3mm] (6,3)--(9,2.5);
%\draw[thick, gray, ->, >=latex, shorten >= 1mm] (9,1.5)--(12,2);
%\draw[thick, gray, ->, >=latex, shorten >= 1mm] (9,2.5)--(12,2);

\draw[black, postaction={decorate}] (0,2)--(2.7,2.8);
\draw[black, postaction={decorate}] (0,2)--(2.7,1.2);
\draw[black, postaction={decorate}] (0,2)--(2.7,2);

\draw[black, postaction={decorate}] (3.3,1.2)--(5.7,1);
\draw[black, postaction={decorate}] (3.3,2)--(5.7,2);
\draw[black, postaction={decorate}] (3.3,2.8)--(5.7,3);

\draw[black, postaction={decorate}] (6.3,1)--(8.7,1.5);
\draw[black, postaction={decorate}] (6.3,2)--(8.7,2);
\draw[black, postaction={decorate}] (6.3,3)--(8.7,2.5);

\draw[black, postaction={decorate}] (9.3,1.5)--(12,2);
\draw[black, postaction={decorate}] (9.3,2)--(12,2);
\draw[black, postaction={decorate}] (9.3,2.5)--(12,2);

%\draw[draw=black,fill=blue!5, rounded corners=7] (2.7,0.5) rectangle (3.3,3.5);
%\draw[draw=black,fill=yellow!40] (2.7,1.2) rectangle (3.3,2.8);
%draw[draw=black,fill=blue!5, rounded corners=7] (5.7,0) rectangle (6.3,4);
%\draw[draw=black,fill=red!40] (5.7,1) rectangle (6.3,3);
%\draw[draw=black,fill=blue!5, rounded corners=7] (8.7,0.3) rectangle (9.3,3.7);
%\draw[draw=black,fill=blue!40] (8.7,1.5) rectangle (9.3,2.5);
\draw[draw=black, rounded corners=7] (2.7,0.5) rectangle (3.3,3.5);
\draw[draw=black] (2.7,1.2) rectangle (3.3,2.8);
\draw[draw=black, rounded corners=7] (5.7,0) rectangle (6.3,4);
\draw[draw=black] (5.7,1) rectangle (6.3,3);
\draw[draw=black, rounded corners=7] (8.7,0.3) rectangle (9.3,3.7);
\draw[draw=black] (8.7,1.5) rectangle (9.3,2.5);

\draw (0,2) node[left] {$\alpha$} (12,2) node[right] {$\omega$};
\fill (0,2) circle (2pt) (12,2) circle (2pt);
\draw (3,2) node {$U'$};
%\draw (6,2) node {$X$};
\draw (9,2) node {$B'$};
\end{tikzpicture} \end{center}
\caption{The sets $U'$ and $B'$ from Theorem~\ref{pls5}.} \label{pls5fig2}
\end{figure}

Consider a row $w$ that contains at least one empty cell $b \in B - B'$ that supports $\sigma$. Each such row contributes $1$ to $\rho'(\sigma)$. The occurrence of cell $b \in B-B'$ in row $w$ supporting $\sigma$ corresponds to a unique path joining $\sigma \in U'$ to $b \in B-B'$ and passing through $(\sigma, w)$ in $X$. Such a path must contain one of the $c(\sigma)$ edges of $T$ joining $\sigma \in U'$ to $X$. Therefore for $\sigma \in U'$ we have
\[\rho'(\sigma) \le c(\sigma), \]
and therefore by (\ref{nurhodash}),
\[ \alpha(\sigma, H-B') \le r - \mu(\sigma) + c(\sigma). \]

So \begin{align*}
\sum_{\sigma = 1}^n \alpha(\sigma, H-B') &\le \sum_{\sigma \in U-U'} r + \sum_{\sigma \in U'} \left( r - \mu(\sigma) + c(\sigma) \right) \\
&\le rn - \sum_{\sigma \in U'} \left( \mu(\sigma) - c(\sigma) \right) \\
&< rn - \size{B'}, & \text{(by (\ref{keyineq}))}
\end{align*} which contradicts \HI{H-B'}.

Hence there must be a $u$-flow from $\alpha$ to $\omega$, and so by Claim~1 $Q_1$ can be constructed.

\begin{center} Step 2. \end{center}

In this step, we shall describe a procedure for constructing a partial latin square $Q_2$ from $P$ by filling \textit{all} the cells of $B$.

This can be done by filling the cells of $B$ row by row. We define the bipartite graphs $G_1, \dots, G_r$ as follows. For $w \in \{1, \dots, r\}$, $G_w$ is the graph with vertex set $S \cup B_w$ where $S = \{1, \dots, n\}$ is the set of symbols and $B_w$ is the subset of $B$ consisting of the members of $B$ that are in row $w$; where a cell $b \in B_w$ is joined to each symbol in $S$ that it supports. Note that some of the graphs $G_1, \dots, G_r$ may have no edges; in this case they can be disregarded. Note that the graphs are constructed in such a way that a matching in $G_w$ that covers $B_w$ corresponds to a filling of the cells $B_w$ where each cell is filled with a symbol that it supports and no two cells are filled with the same symbol.

\begin{claim} There is a matching in $G_w$ that covers $B_w$. \end{claim}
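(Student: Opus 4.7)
The plan is to apply Hall's Theorem (Theorem~\ref{hall2}) to the bipartite graph $G_w$, with $B_w$ being the side we want to cover. So I need to verify: for every $B'_w \subseteq B_w$, the neighbourhood $N_{G_w}(B'_w) \subseteq S$ has size at least $\size{B'_w}$.

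The key observation is that all cells of $B'_w$ lie in the same row $w$, so no two of them can appear together in any independent set (in the sense used to define $\alpha$). Consequently, for every symbol $\sigma \in \{1,\dots,n\}$,
\[ \alpha(\sigma, B'_w) \in \{0,1\}, \]
and $\alpha(\sigma, B'_w) = 1$ precisely when some cell of $B'_w$ supports $\sigma$, i.e.\ precisely when $\sigma \in N_{G_w}(B'_w)$. Summing over $\sigma$,
\[ \sum_{\sigma=1}^n \alpha(\sigma, B'_w) = \size{N_{G_w}(B'_w)}. \]

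Now I bring in the hypothesis that $P$ satisfies \hc. Since $B'_w \subseteq B$ is a set of empty cells, the \hil\ \HI{B'_w} holds, giving
\[ \sum_{\sigma=1}^n \alpha(\sigma, B'_w) \ge \size{B'_w}. \]
Combining with the identity above yields $\size{N_{G_w}(B'_w)} \ge \size{B'_w}$, which is exactly Hall's condition for $G_w$ (with $B_w$ as the side to be matched). Hall's Theorem then supplies a matching covering $B_w$, proving Claim~2.

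I do not anticipate any real obstacle here: the claim is essentially the ``one-row'' special case of \hc, and the whole argument hinges on the simple fact that cells in a single row force $\alpha(\sigma, \cdot) \le 1$, collapsing the \hil\ to Hall's marriage condition.
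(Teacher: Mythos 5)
Your proof is correct and takes essentially the same approach as the paper: both arguments observe that cells of a single row force $\alpha(\sigma, B') \le 1$, so that \HI{B'} collapses to the marriage condition for $G_w$, and then invoke Hall's Theorem. The only difference is presentational --- the paper phrases it as a proof by contradiction while you verify Hall's condition directly.
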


\begin{proof} Suppose not. Then by Theorem~\ref{hall2} (Hall's Theorem) there is a subset $B' \subseteq B_w$ with neighbour set $Y \in S$ such that $\size{Y} < \size{B'}$. But for any symbol $\sigma \in \{1, \dots, n\}$ we have $\alpha(\sigma, B') \le 1$, so \HI{B'} implies that the cells of $B'$ support at least $\size{B'}$ symbols, which contradicts $B'$ having fewer than $\size{B'}$ neighbours in $G_w$. This proves Claim 2. \renewcommand{\qedsymbol}{} \end{proof}

It follows that it is possible to fill all the cells of $B_w$ for each $w \in \{1, \dots, r\}$, and so in this way $Q_2$ can be constructed.

\begin{center} Step 3. \end{center}

In this step, we shall describe a procedure for constructing a partial latin square $Q_3$ from $Q_1$ and $Q_2$, in which all the cells of $B$ are filled, and each symbol appears at least $r + s - n$ times within $R$. We shall make further use of the bipartite graphs $G_1, \dots, G_r$ defined in Step 2.

For each row $w \in \{1, \dots, r\}$ that contains cells in $B$, the way that the cells of $B_w$ are filled in $Q_1$ and $Q_2$ give two matchings in $G_w$, $M_1$ and $M_2$. By Theorem~\ref{dulmage} (the Dulmage-Mendelsohn Theorem), there is a matching $M_w \subseteq M_1 \cup M_2$ that covers all the vertices in $B_w$ that are covered by $M_2$ (i.e{.} in fact, \textit{all} the vertices of $B_w$), and all the vertices in $S$ that are covered by $M_1$.

Then $Q$ can be created by taking $P$ and filling the cells of $B$ according to the matching $M_w$ for each row $w$. Since for each $w \in \{1, \dots, r\}$, $B_w$ contains all the symbols that it does in $Q_1$, each symbol $\sigma \in U$ appears at least $\mu(\sigma)$ times in $B$. It follows that in $Q$ each symbol appears at least $r + s - n$ times.

In $Q$, all the cells of $B$ are filled, and each symbol appears at least $r + s - n$ times. Thus $Q$ satisfies Ryser's Condition, and so by Theorem~\ref{ryser} it is possible to fill in the empty cells of $Q$ to create a latin square. This proves Theorem~\ref{pls5}. \end{proof}

\section{Complexity questions} \label{sec:complexity}

In this section we give the proof of Theorem~\ref{plsnph}. We need the following theorem of Kratochv\'il \cite{kratochvil}. A \textit{$(k$-in-$m)$-colouring} of an $m$-uniform hypergraph is a colouring of the vertices with red and blue such that each edge contains exactly $k$ red vertices and $m - k$ blue vertices.

\begin{theorem} For every $q\ge 3$, $m\ge 3$ and $1\le k\le m-1$, the problem of deciding $(k$-in-$m)$-colourability of $q$-regular $m$-uniform hypergraphs is NP-complete. \label{kratochvil} \end{theorem}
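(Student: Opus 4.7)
Membership in NP is immediate: given a red/blue assignment, we can verify in linear time that every edge contains exactly $k$ red vertices. The substantial content is NP-hardness, and it must be established uniformly for the whole parameter range $q\ge 3$, $m\ge 3$, $1\le k\le m-1$. My plan is a three-stage reduction: fix a convenient base case, propagate hardness across all $(k,m)$, and finally impose $q$-regularity.

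For the base case I would invoke Schaefer's theorem, which implies that $(1,3)$-colourability of $3$-uniform hypergraphs (i.e.\ positive $1$-in-$3$ SAT) is NP-complete, with no regularity restriction. To move across $(k,m)$ I would first exploit the symmetry $(k,m)\leftrightarrow(m-k,m)$ obtained by swapping colours, so I may assume $k\le m/2$. To go from $(k,m)$ to $(k,m+1)$, I would append a fresh vertex $b_e$ to every edge $e$ of the input and attach to each $b_e$ a ``blue-forcer'' gadget, that is, a rigid $(m+1)$-uniform sub-hypergraph every one of whose $(k,m+1)$-colourings places $b_e$ blue; going from $(k,m)$ to $(k+1,m+1)$ uses a red-forcer in the same way. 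Iterating these two moves, composed with the colour swap, reaches every admissible $(k,m)$ starting from the base case $(1,3)$.

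For the final stage I would prepare, once for each triple $(k,m,q)$, a $q$-regular $m$-uniform ``padding'' hypergraph $F_{k,m,q}$ which is \emph{flexible} in the sense that it admits a $(k,m)$-colouring and each of its vertices can take either colour in some valid colouring. Given an instance $H$ output by stage two with maximum degree $d$, I would pick a multiple $Q$ of $q$ with $Q\ge d$ and, for every vertex $v$ with $d_H(v)<Q$, identify $v$ with $Q-d_H(v)$ distinct vertices of fresh copies of $F_{k,m,q}$ so that the degree of $v$ becomes exactly $Q$. Flexibility guarantees that no new constraint is placed on the colour of $v$, so $(k,m)$-colourability is preserved in both directions. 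A final scaling step, together with enough internal copies of $F_{k,m,q}$ to absorb slack, tunes the construction so that the output is $q$-regular (not merely $Q$-regular).

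The main obstacle is the construction of the forcer gadgets in stage two and of the flexible hypergraph $F_{k,m,q}$ in stage three: every auxiliary edge must itself be $m$-uniform, and a forcer must pin a designated terminal to a specific colour despite the colour-swap symmetry of any single edge. I would break that symmetry by fitting two terminals of opposite forced colours into the same gadget, or by using a small ``anchor'' sub-hypergraph whose valid colourings can be enumerated by hand. Once these gadgets are in place, the three stages compose into a polynomial-time reduction that yields NP-completeness for every admissible triple $(q,m,k)$.
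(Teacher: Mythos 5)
You should first note that the paper does not prove Theorem~\ref{kratochvil} at all: it is quoted as a result of Kratochv\'il \cite{kratochvil} and used as a black box, so there is no in-paper argument to compare yours against. Judged on its own, your sketch has the right overall architecture (NP membership is trivial, base case from Schaefer, induction on $(k,m)$, then regularization), but it contains two genuine gaps. The first is the forcer gadgets. When $m=2k$ the colour swap is an automorphism of the constraint ``exactly $k$ of $m$ vertices red'', so every valid colouring of any gadget comes paired with its swap and \emph{no} vertex can be pinned to a specific colour: blue-forcers and red-forcers simply do not exist for balanced parameters. Your step into $(k,m+1)$ needs a blue-forcer at the target parameters and your step into $(k+1,m+1)$ needs a red-forcer, so the induction breaks exactly when the target is balanced --- for instance reaching $(2,4)$ from $(1,3)$, which is precisely the case $2$-in-$4$ on $4$-regular $4$-uniform hypergraphs that the paper relies on in Lemma~\ref{plsred}. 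Your proposed repair (a gadget with two terminals of opposite forced colours) is not worked out, and the obvious way to deploy it --- attaching each terminal as the $(m+1)$st vertex of a copy of the edge --- forces the original $m$ vertices to contain both $k$ and $k+1$ red vertices simultaneously, making the instance unsatisfiable rather than equivalent. The balanced case needs a genuinely different construction (equality/inequality gadgets and a separate reduction), and that is where the real content of the theorem lies.

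The second gap is the regularization stage. If $F_{k,m,q}$ is itself $q$-regular, then identifying $v$ with one of its vertices raises $\deg(v)$ by $q$, so your padding can only change degrees in multiples of $q$ and cannot realize an arbitrary deficit $Q-d_H(v)$; and there is no ``final scaling step'' that turns a $Q$-regular hypergraph into a $q$-regular one for $q<Q$, since adding gadgets never decreases a degree. The steps must be done in the opposite order: first reduce the maximum degree by splitting each high-degree vertex into a cluster of low-degree copies forced to agree in colour (which again requires equality gadgets that are $m$-uniform and whose internal edges are charged against the degree budget --- and which face the same symmetry obstruction as the forcers), and only then pad every vertex up to degree exactly $q$ using flexible gadgets whose attachment vertices have internal degree $q-1$, while respecting the global divisibility constraint $qN=mE$. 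As written, neither stage of your reduction goes through without substantial additional gadget constructions, which is presumably why the paper cites Kratochv\'il rather than reproving the result.
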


So in particular, the following problem is NP-complete.

\begin{problem} Let $H$ be a 4-uniform 4-regular hypergraph. Decide if $H$ is 2-in-4 colourable. \label{2-in-4} \end{problem}

A partial latin square of order $n$ is said to be \textit{L-shaped} if all the cells are filled except for those in the upper left $r\times s$ rectangle, for some $r,s \in \{1, \dots, n\}$. (This condition on the shape is opposite to that required for Theorems \ref{ryser} and \ref{pls1}.)

\begin{problem} Let $P$ be an L-shaped partial latin square. Decide if $P$ is completable. \label{lshapeprob} \end{problem}

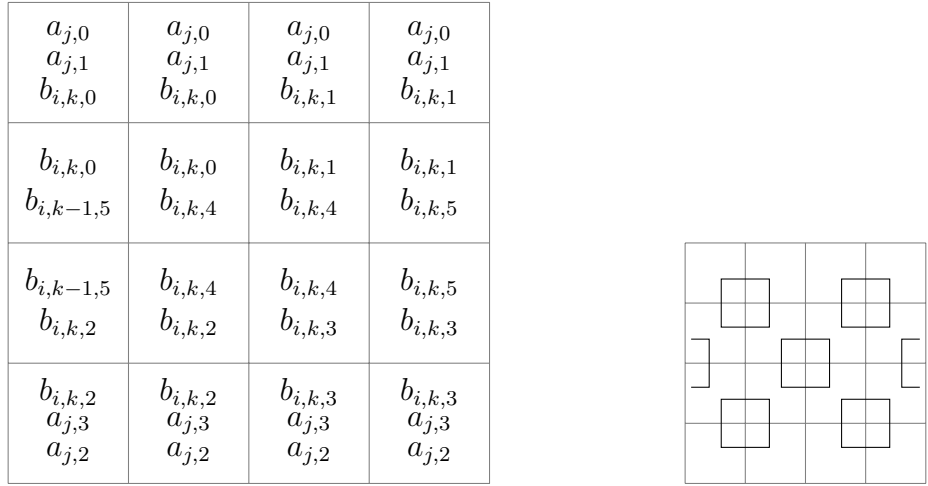
\begin{figure}
\begin{center} \begin{tikzpicture}
\begin{scope}[scale=0.8]
%\path (4,0) node[below] {(a)};
%\fill[yellow!30] (0,0) rectangle (8,8);
\draw[gray,step=2cm] (0,0) grid (8,8);
\draw (1,7.5) node {$a_{j,0}$} (1,7) node {$a_{j,1}$} (1,6.5) node {$b_{i,k,0}$}
(3,7.5) node {$a_{j,0}$} (3,7) node {$a_{j,1}$} (3,6.5) node {$b_{i,k,0}$}
(5,7.5) node {$a_{j,0}$} (5,7) node {$a_{j,1}$} (5,6.5) node {$b_{i,k,1}$}
(7,7.5) node {$a_{j,0}$} (7,7) node {$a_{j,1}$} (7,6.5) node {$b_{i,k,1}$}
(1,4.66) node {$b_{i,k-1,5}$} (1,5.33) node {$b_{i,k,0}$}
(3,4.66) node {$b_{i,k,4}$} (3,5.33) node {$b_{i,k,0}$}
(5,4.66) node {$b_{i,k,4}$} (5,5.33) node {$b_{i,k,1}$}
(7,4.66) node {$b_{i,k,5}$} (7,5.33) node {$b_{i,k,1}$}
(1,3.33) node {$b_{i,k-1,5}$} (1,2.66) node {$b_{i,k,2}$}
(3,3.33) node {$b_{i,k,4}$} (3,2.66) node {$b_{i,k,2}$}
(5,3.33) node {$b_{i,k,4}$} (5,2.66) node {$b_{i,k,3}$}
(7,3.33) node {$b_{i,k,5}$} (7,2.66) node {$b_{i,k,3}$}
(1,0.5) node {$a_{j,2}$} (1,1) node {$a_{j,3}$} (1,1.5) node {$b_{i,k,2}$}
(3,0.5) node {$a_{j,2}$} (3,1) node {$a_{j,3}$} (3,1.5) node {$b_{i,k,2}$}
(5,0.5) node {$a_{j,2}$} (5,1) node {$a_{j,3}$} (5,1.5) node {$b_{i,k,3}$}
(7,0.5) node {$a_{j,2}$} (7,1) node {$a_{j,3}$} (7,1.5) node {$b_{i,k,3}$};
\end{scope}
\begin{scope}[xshift=9cm, scale=0.8]
%\path (2,0) node[below] {(b)};
%\fill[yellow!30] (0,0) rectangle (4,4);
\draw[gray,step=1cm] (0,0) grid (4,4);
\draw[black] (0.6,0.6) rectangle +(0.8,0.8)
(2.6,0.6) rectangle +(0.8,0.8)
(0.6,2.6) rectangle +(0.8,0.8)
(2.6,2.6) rectangle +(0.8,0.8)
(1.6,1.6) rectangle +(0.8,0.8)
(0.1,1.6)--(0.4,1.6)--(0.4,2.4)--(0.1,2.4)
(3.9,1.6)--(3.6,1.6)--(3.6,2.4)--(3.9,2.4);
\end{scope}
\end{tikzpicture} \end{center}
\caption{The symbols in the $4 \times 4$ square of $M$ whose top-left cell is $(4i,4j)$ (left), and the positions of the symbols in $B$ indicated by rectangles (right).} \label{4x4} \end{figure}

We shall show that Problem~\ref{lshapeprob} is NP-complete, by giving a reduction from Problem~\ref{2-in-4} to Problem~\ref{lshapeprob}.\footnote{The reduction is a \textit{Karp reduction}, sometimes called a ``polynomial transformation''. All this terminology is discussed in \cite{gareyjohnson}.} The reduction has the property that its image is contained in the set of L-shaped partial latin squares that satisfy \hc.

\begin{lemma} Problem~\ref{lshapeprob} is NP-complete. Moreover, there is a reduction from Problem~\ref{2-in-4} to Problem~\ref{lshapeprob} that maps 4-uniform 4-regular hypergraphs to L-shaped partial latin squares that satisfy \hc. \label{plsred} \end{lemma}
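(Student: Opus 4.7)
The plan is to prove two things. First, Problem~\ref{lshapeprob} belongs to NP: a completion of an L-shaped partial latin square is itself a polynomial-size certificate, and its validity is verified by checking that each row, column, and symbol class forms a permutation pattern. Second, to get NP-hardness together with the promise that the image lies in the class satisfying \hc, I would exhibit a Karp reduction from Problem~\ref{2-in-4} that produces L-shaped partial latin squares which are forced, by construction, to satisfy \hc.

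Given a 4-uniform 4-regular hypergraph $H$ with vertex set $\{v_1,\dots,v_n\}$ and edge set $\{e_1,\dots,e_n\}$, I would construct an L-shaped partial latin square $M = P(H)$ of order $N$ polynomial in $n$, whose empty rectangle sits in the upper-left corner and is partitioned into $4\times 4$ blocks $B(i,j)$, one for each incidence of $v_j$ with $e_i$ (appearing in a 4-row band dedicated to $e_i$ and a 4-column band dedicated to $v_j$). The filled cells surrounding the empty rectangle carry two families of symbols: \emph{vertex symbols} $a_{j,0},\dots,a_{j,3}$, four per vertex $v_j$, and \emph{edge symbols} $b_{i,k,0},\dots,b_{i,k,5}$, six per incidence $(i,k)$ of $e_i$ with its $k$-th vertex. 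The distribution of filled cells is chosen so that inside each block $B(i,j)$ the empty cells support precisely the symbols shown in the left-hand picture of the figure, while the right-hand picture records the positions of the filled cells of that block. The arrangement is such that any completion of $B(i,j)$ places the $a_{j,\cdot}$ symbols in one of two patterns, which we interpret as colouring $v_j$ red or blue, and the $b_{i,k,\cdot}$ cells spread across the four blocks $B(i,j_1),B(i,j_2),B(i,j_3),B(i,j_4)$ for the vertices of $e_i$ can be closed up into a valid completion if and only if exactly two of the incident vertices are coloured red.

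To prove the equivalence I would argue block by block: a 2-in-4 colouring of $H$ yields a completion by filling each $B(i,j)$ according to the assigned colour and then completing the edge-row-bands consistently, and conversely any completion forces a colour for each $v_j$ that is consistent across all its incidences because each $a_{j,\ell}$ must appear exactly once in the $v_j$-column-band. For \hc\ I would invoke Theorem~\ref{atleast1t}: each vertex symbol $a_{j,\ell}$ and edge symbol $b_{i,k,\ell}$ appears a prescribed number of times $\nu(\sigma)$ in $M$, and each empty cell supports exactly four symbols drawn from these families; a direct arithmetic check shows
\[ \sum_{\sigma \in S(b)} \frac{1}{N - \nu(\sigma)} \ge 1 \]
for every empty cell $b$, so \hc\ follows.

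The main obstacle is combinatorial and numerical: the gadget must \emph{simultaneously} encode the 2-in-4 constraint faithfully, fit inside an L-shape so that no empty cell strays outside the designated rectangle, and meet the delicate bound required by Theorem~\ref{atleast1t}. Balancing the multiplicities $\nu(\sigma)$ against the size of the support of each empty cell is what dictates the specific choice of four vertex symbols per vertex and six edge symbols per incidence, and the precise placement of filled and empty cells within each $4\times 4$ block; showing that this balance is tight everywhere will be the most laborious step of the proof.
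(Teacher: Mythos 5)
Your overall architecture matches the paper's: a Karp reduction from Problem~\ref{2-in-4} built from $4\times 4$ incidence gadgets, with four symbols per vertex-or-edge and six per incidence placed so that each gadget admits exactly two completion patterns (``red''/``blue''), the 2-in-4 constraint enforced by the four-fold occurrence of the $a$-symbols in a band, and \hc\ verified via Theorem~\ref{atleast1t}. However, there are two concrete gaps. First, the empty $4u\times 4u$ rectangle contains $u^2$ blocks, but you only specify gadgets for the $4u$ blocks corresponding to incidences; the remaining $u(u-4)$ blocks, corresponding to non-incident vertex--edge pairs, are never addressed. The paper must introduce a third symbol family (four symbols $c_{i,j,k}$ confined to each non-incidence block) precisely to make every row and column list have the correct size $4u$, to make those blocks completable, and to make their empty cells satisfy the hypothesis of Theorem~\ref{atleast1t}; without them the framework is not balanced, cannot be realized as an L-shaped partial latin square, and the empty cells in those blocks would support no symbols at all, so \hc\ would fail outright.

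Second, two quantitative claims are left unsubstantiated in a way that matters. You assert that every empty cell supports exactly four symbols and that $\sum_{\sigma\in S(b)} \frac{1}{n-\nu(\sigma)}\ge 1$ then follows by ``a direct arithmetic check''; in the paper's construction the empty cells support two, three, or four symbols (two $B$-symbols, or one $B$-symbol and two $A$-symbols, or four $C$-symbols), the $B$-symbols are missing from exactly $2$ rows and $2$ columns while the $A$- and $C$-symbols are missing from exactly $4$, and the sum equals exactly $1$ in every case --- there is no slack, so the multiplicities cannot be asserted loosely. Relatedly, you do not explain how to construct the filled L-shaped region so that the prescribed row and column lists (hence the prescribed supports) are actually achieved; this is the content of the paper's Theorem~\ref{fr1}, which realizes any balanced framework via two applications of K\"onig's edge-colouring theorem, and it is also where polynomial-time computability of the reduction is established. (Your NP-membership argument for Problem~\ref{lshapeprob} is fine.)
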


An $r \times s$ \textit{latin rectangle} is an $r \times s$ array in which each cell is filled, and no symbol appears more than once in any row or column. A \textit{framework} is a tuple \[R = (r, s, t, R_1, \dots, R_r, C_1, \dots, C_s)\] where $r$, $s$ and $t$ are positive integers, and $R_1, \dots, R_r, C_1, \dots, C_s$ are subsets of $\{1, \dots, t\}$. The sets $R_1, \dots, R_r$ are called the \textit{row lists}, and the sets $C_1, \dots, C_s$ are called the \textit{column lists}.\footnote{Our frameworks are a special case of the ``latin frameworks'' from \cite{colbourn, eastonparker} and the ``patterned holes'' from \cite{lindnerrodger}.}

A \textit{latinization} of a framework $R = (r, s, t, R_1, \dots, R_r, C_1, \dots, C_s)$ is an $r \times s$ latin rectangle where each symbol that appears in row $i$ belongs to $R_i$, for each $1 \le i \le r$, and each symbol that appears in column $j$ belongs to $C_j$, for each $1 \le j \le s$. %So we have the following problem.

An L-shaped partial latin square $P$ is said to \textit{realize} $R$ (or be a \textit{realization} of $R$), if $R_i$ is the set of symbols missing from row $i$, for each $1 \le i \le r$, and $C_j$ is the set of symbols missing from column $j$, for each $1 \le j \le s$. Thus if $P$ realizes $R$, the upper left $r \times s$ rectangle of any completion of $P$ is a latinization of $R$, and \textit{vice-versa}.

If $P$ is an L-shaped partial latin square, where the upper left $r \times s$ rectangle is empty, then we can create a framework $R = (r, s, t, R_1, \dots, R_r, C_1, \dots, C_s)$, for which $P$ is a realization, in the following natural way. For each $1 \le i \le r$, let $R_i$ be the set of symbols missing from row $i$, and for each $1 \le j \le s$, let $C_j$ be the set of symbols missing from column $j$. Such a framework will have the following properties.

\begin{enumerate}[(i)]
 \item $\size{R_i} = s$ for each $1 \le i \le r$.
 \item $\size{C_j} = r$ for each $1 \le j \le s$.
 \item Each symbol appears the same number of times in the row lists as it does in the column lists.
\end{enumerate}

If a framework satisfies conditions (i), (ii) and (iii), it is said to be \textit{balanced}. It turns out that given any balanced framework $R$, there is an L-shaped partial latin square $P$ that realizes $R$.

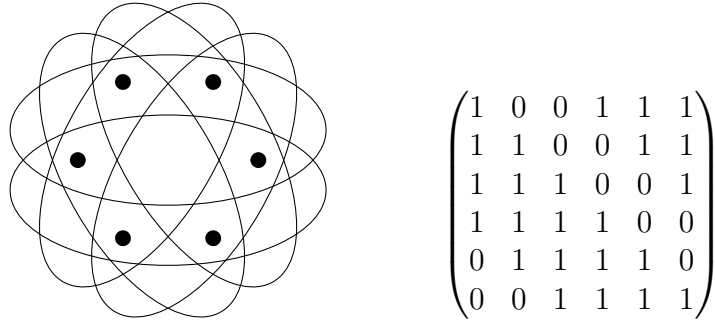
\begin{figure} \begin{center}
\begin{tikzpicture}
\draw (0,1.5) node {$\begin{pmatrix}
1&0&0&1&1&1\\
1&1&0&0&1&1\\
1&1&1&0&0&1\\
1&1&1&1&0&0\\
0&1&1&1&1&0\\
0&0&1&1&1&1
\end{pmatrix}$};
\begin{scope}[xshift=-5.5cm,yshift=2.1cm]
\draw[rotate=0]  (0,0.4) ellipse (2.1 and 1);
\draw[rotate=60]  (0,0.4) ellipse (2.1 and 1);
\draw[rotate=120] (0,0.4) ellipse (2.1 and 1);
\draw[rotate=180] (0,0.4) ellipse (2.1 and 1);
\draw[rotate=240]  (0,0.4) ellipse (2.1 and 1);
\draw[rotate=300]  (0,0.4) ellipse (2.1 and 1);
\fill[rotate=30] (0,1.2) circle(3pt);
\fill[rotate=90] (0,1.2) circle(3pt);
\fill[rotate=150] (0,1.2) circle(3pt);
\fill[rotate=210] (0,1.2) circle(3pt);
\fill[rotate=270] (0,1.2) circle(3pt);
\fill[rotate=330] (0,1.2) circle(3pt);
\end{scope}
\end{tikzpicture} \caption{An example of a 4-uniform 4-regular hypergraph $H$, where the edges are drawn as ellipses (left), and its incidence matrix $D$, under a suitable labelling of the vertices and edges (right).} \label{rh} \end{center} \end{figure}

\begin{figure} \begin{center}
\begin{tikzpicture} \input{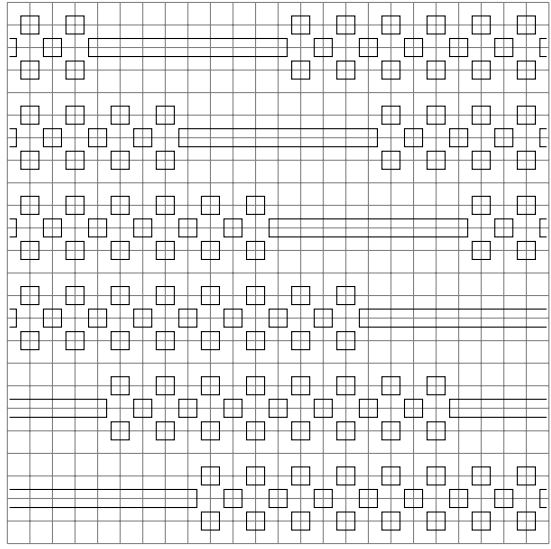}
\end{tikzpicture} \caption{The position of the symbols from $B$ in $M$.} \label{rh2}
\end{center} \end{figure}

\begin{theorem} Let $R = (r, s, t, R_1, \dots, R_r, C_1, \dots, C_s)$ be a balanced framework. For any $n \ge \max \{t, r+s\}$, there is a partial latin square $P$ of order $n$ that realizes $R$, and such a partial latin square $P$ can be found in polynomial time. \label{fr1} \end{theorem}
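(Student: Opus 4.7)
The plan is to build $P$ in two phases, each reducing to an application of König's edge-colouring theorem (Theorem~\ref{konig}).

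In the first phase I would fill the bottom-left block $\mathrm{BL}$, the $(n-r)\times s$ array occupying rows $r+1,\dots,n$ and columns $1,\dots,s$, so that column $j$ of $\mathrm{BL}$ is a permutation of $\{1,\dots,n\}\setminus C_j$ and no symbol is repeated in any row. Form the bipartite graph $G$ whose vertex parts are the columns $\{1,\dots,s\}$ and the symbols $\{1,\dots,n\}$, placing an edge $(j,k)$ whenever $k\notin C_j$. Writing $m_k$ for the common number of row lists (equivalently, column lists) containing $k$, every column vertex has degree $n-r$, every symbol $k\le t$ has degree $s-m_k$, and every symbol $k>t$ has degree $s$. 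Since $n\ge r+s$, $\Delta(G)=n-r$, so Theorem~\ref{konig} gives a proper $(n-r)$-edge-colouring of $G$. Identifying the colours with rows $r+1,\dots,n$ and placing in cell $(i,j)$ the symbol joined to column $j$ by the edge of colour $i$ produces the desired $\mathrm{BL}$.

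In the second phase I would fill the entire right-hand strip (rows $1,\dots,n$, columns $s+1,\dots,n$) at once. For each row $i$ set $E_i=\{1,\dots,n\}\setminus R_i$ when $i\le r$, and $E_i=\{1,\dots,n\}\setminus(\text{row}_i\,\mathrm{BL})$ when $i>r$; every $E_i$ has size $n-s$. Now form the bipartite graph $H$ whose vertex parts are the rows $\{1,\dots,n\}$ and the symbols $\{1,\dots,n\}$, with an edge $(i,k)$ exactly when $k\in E_i$. The crucial claim is that $H$ is $(n-s)$-regular. Row degrees are $|E_i|=n-s$. For a symbol $k\le t$, balancedness gives $k\notin R_i$ for $r-m_k$ of the top rows, while by the construction of $\mathrm{BL}$ the symbol $k$ appears in $s-m_k$ columns of $\mathrm{BL}$ (once each), so $k$ is absent from $\text{row}_i\,\mathrm{BL}$ for $(n-r)-(s-m_k)=n-r-s+m_k$ of the bottom rows; the two contributions add to $n-s$. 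The analogous calculation (with $m_k=0$) handles $k>t$. Theorem~\ref{konig} then decomposes $H$ into $n-s$ perfect matchings $M_{s+1},\dots,M_n$; placing in cell $(i,j)$ the unique symbol matched to row $i$ in $M_j$ completes the right-hand strip, so that each row $i$ receives exactly the symbols of $E_i$ and each column $j>s$ becomes a permutation of $\{1,\dots,n\}$.

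Combining the two phases yields an L-shaped partial latin square $P$ whose missing sets are $R_1,\dots,R_r$ in the top rows and $C_1,\dots,C_s$ in the left columns, so $P$ realizes $R$. The hard part is establishing the $(n-s)$-regularity of $H$ in the second phase, and this is where the balancedness hypothesis plays an essential role: the top-row contributions to the symbol degrees come from the row lists, while the bottom-row contributions come from the column lists through $\mathrm{BL}$, and only the hypothesis that each symbol occurs equally often in row lists and column lists guarantees that these two contributions fit together to give the uniform degree $n-s$. Polynomial running time is then automatic, since König's theorem admits a constructive proof via alternating-path augmentation, and so both the edge-colouring and the matching decomposition can be computed in polynomial time.
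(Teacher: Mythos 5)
Your proof is correct and is essentially the paper's own argument with the roles of rows and columns interchanged: the paper first fills the top-right strip (rows $1,\dots,r$, columns $s+1,\dots,n$) from the row lists and then the whole bottom $n-r$ rows, whereas you first fill the bottom-left block from the column lists and then the whole right-hand strip. Both versions are the same two applications of K\"onig's edge-colouring theorem, with balancedness used in exactly the same way to make the second bipartite graph regular, so there is nothing to add.
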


\begin{proof} Fix an $n$ such that $n \ge \max \{t, r+s\}$. We shall give a procedure, consisting of two steps, for constructing an L-shaped partial latin square $P$ of order $n$ that realizes $R$. Initially, suppose $P$ is a partial latin square of order $n$ with all cells empty. In the first step, we fill the cells in the rightmost $n-s$ columns of the top $r$ rows of $P$, and in the second step, we fill the cells in the bottom $n-r$ rows.

Step 1 is as follows. Consider the bipartite graph $G_1$ with bipartition $(A, B)$, where the vertices in $A$ are labelled $a_1, \dots, a_n$ and correspond to the symbols $1, \dots, n$; the vertices in $B$ are labelled $b_1, \dots, b_r$ and correspond to rows $1, \dots, r$ of $P$; and where there is an edge between $a_i$ and $b_j$ if and only if symbol $i$ is not in $R_j$, for each $1 \le i \le n$ and $1 \le j \le r$.

Thus $a_i$ has degree $r - \nu(i)$, where $\nu(i)$ is the number of times that the symbol $i$ occurs in the row lists, for each $1 \le i \le n$; and $b_j$ has degree $n - s$, as there are $s$ symbols in the row list $R_j$, for each $1 \le j \le r$. The maximum degree $\Delta(G_1)$ is $n - s$ for otherwise there would be some $i$ for which $r - \nu(i) > n - s$, which contradicts the assumption that $n \ge r + s$.

By Theorem~\ref{konig} (K\"onig's Theorem), $G_1$ has a proper edge-colouring with the colours $1, \dots, n-s$. So the cells in the rightmost $n-s$ columns of the top $r$ rows of $P$ can be filled according to this edge-colouring: symbol $i$ is placed in cell $(j, s + k)$ if and only if $a_i$ is joined to $b_j$ with an edge of colour $k$, for each $1 \le i \le n$, $1 \le j \le r$, $1 \le k \le n - s$. At this point the number of times the symbol $i$ either appears in the top $r$ rows of $P$ or in one of the row lists $R_1, \dots, R_r$ is $r$, for each $1 \le i \le n$. And because the framework $R$ is balanced, the number of times $i$ appears in the column lists $C_1, \dots, C_s$ or in the top $r$ rows of $P$ is also $r$, for each $1 \le i \le n$.

Step 2 is as follows. Consider the bipartite graph $G_2$ with bipartition $(C, D)$, where the vertices in $C$ are labelled $c_1, \dots, c_n$ and correspond to the $n$ columns of $P$, the vertices in $D$ are labelled $d_1, \dots, d_n$ and correspond to the symbols $1, \dots, n$, and where there is an edge between $d_i$ and $c_j$ if and only if $i$ is not in $C_j$ (when $1 \le j \le s$) or $i$ does not appear in the top $r$ cells of column $j$ of $P$ (when $s < j \le n$) for each $1 \le i \le n$ and $1 \le j \le n$. Each vertex in $G_2$ has degree $n-r$, so by Theorem~\ref{konig} (K\"onig's Theorem) we can give $G_2$ a proper edge-colouring using the colours $1, \dots, n - r$. We can now fill the bottom $n - r$ rows of $P$ according to this edge-colouring: symbol $i$ in placed in cell $(r + k, j)$ if and only if $d_i$ is joined to $c_j$ with an edge of colour $k$, for each $1 \le i \le n$, $1 \le j \le n$, $1 \le k \le n - r$.

This procedure creates an L-shaped partial latin square $P$, where the symbols missing from rows $1, \dots, r$ are those in the row lists $R_1, \dots, R_r$ and the symbols missing from columns $1, \dots, s$ are those in the column lists $C_1, \dots, C_s$. In other words, $P$ is a realization of $R$. The procedure involves edge-colouring two graphs, each with at most $2n$ vertices, which can be done in $O(n^3)$ time (see e.g. \cite[Chapter 20]{schrijver}). Since $n$ is polynomial in the size of the framework, the procedure can be performed in polynomial time. \end{proof}

Given a framework $R$, the \textit{admissible symbol array} $M = M(R)$ is an $r \times s$ array in which $M_{ij} = R_i \cap C_j$ for each $1 \le i \le r$ and $1 \le j \le s$. Thus a latinization of $R$ can be described as an $r \times s$ latin rectangle $L$ for which $L_{ij} \in M_{ij}$ for each $1 \le i \le r$ and $1 \le j \le s$. We are now ready to prove Lemma~\ref{plsred}.

\begin{proof}[Proof of Lemma~\ref{plsred}]
Let $H$ be a 4-uniform 4-regular hypergraph of order $u$. We shall give a procedure for constructing an L-shaped partial latin square $Q$ such that $Q$ is completable if and only if $H$ is 2-in-4 colourable. $Q$ will be an L-shaped partial latin square of order $n = 4u^2 + 12u$, in which the upper left $4u \times 4u$ square is empty, and the rest of the cells are filled. In addition, $Q$ will satisfy \hc.

Note that since $H$ is 4-uniform and 4-regular, $H$ has the same number of vertices as edges. So $H$ has $u$ vertices and $u$ edges, which we assume are labelled $v_0, \dots, v_{u-1}$ and $e_0, \dots, e_{u-1}$ respectively. We shall describe the construction from $H$ of a balanced framework
\[R = R(H) = (4u, 4u, n, R_1, \dots, R_{4u}, C_1, \dots, C_{4u}), \]
with the property that $R$ is latinizable if and only if $H$ is 2-in-4 colourable. Instead of describing $R$ directly, we describe its admissible symbol array $M$.

$M$ is a $4u \times 4u$ array whose entries are subsets of $\{1, \dots, n\}$. For convenience we consider the set $\{1, \dots, n\}$ to be the union of three sets $A$, $B$ and $C$, whose members are labelled as follows. $A$ consists of $a_{j,k}$ for each $0 \le j \le u - 1$ and $0 \le k \le 3$; $B$ consists of $b_{i,j,k}$ for each $0 \le i \le u - 1$, $0 \le j \le 3$ and $0 \le k \le 5$; and $C$ consists of $c_{i,j,k}$ for each $0 \le i \le u - 1$, $0 \le j \le u - 1$, $0 \le k \le 3$ for which $v_i$ is not incident with $e_j$.
%\begin{itemize}
% \item $A$ consists of $a_{j,k}$ for all $0 \le j \le u - 1$ and $0 \le k \le 3$.
% \item $B$ consists of $b_{i,j,k}$ for all $0 \le i \le u - 1$, $0 \le j \le 3$ and $0 \le k \le 5$.
% \item $C$ consists of $c_{i,j,k}$ for all $0 \le i \le u - 1$, $0 \le j \le u - 1$, $0 \le k \le 3$ such that $v_i$ is not incident with $e_j$.
%\end{itemize}
Thus $\size{A} = 4u$, $\size{B} = 24u$, $\size{C} = 4u(u-4)$ and $n = \size{A} + \size{B} + \size{C}$.

We can consider the cells of $M$ as consisting of a $u \times u$ grid of $4 \times 4$ squares. In the construction of $M$, the entries in the $4 \times 4$ square whose top-left cell is $(4i, 4j)$ are determined by whether vertex $v_i$ is incident with the edge $e_j$.

If $v_i$ is incident with $e_j$, and $e_j$ is the $(k+1)$st edge in the ordering $e_0, \dots, e_{u-1}$ that is incident with $v_i$, then the cells of the $4 \times 4$ square whose top-left cell is $(4i, 4j)$ contain some symbols from $A$ and some from $B$. Each cell in the top row contains the symbols $a_{j,0}$ and $a_{j,1}$, and each cell in the bottom row contains the symbols $a_{j,2}$ and $a_{j,3}$. The cells contain the symbols $b_{i,k,l}$, for each $l \in \{0, \dots, 5\}$, and $b_{i,k-1,5}$ (where subtraction is taken modulo 4), in the manner shown in Figure~\ref{4x4}. The right-hand diagram gives a simplified picture, where the positions of the symbols in $B$ are indicated by rectangles; for each symbol in $B$, there is a rectangle whose corners are located in the cells that contain it.

If $v_i$ is not incident with $e_j$, then the cells of the $4 \times 4$ square whose top-left cell is $(4i, 4j)$ contain some symbols from $C$; namely the symbols $c_{i,j,0}$, $c_{i,j,1}$, $c_{i,j,2}$ and $c_{i,j,3}$.

Figure~\ref{rh} gives an example of the a 4-uniform 4-regular hypergraph $H$ and its incidence matrix. Figure~\ref{rh2} gives a simplified illustration of the array $M$, constructed from $H$, where the positions of the symbols from $B$ are indicated by rectangles.

It is easy to verify that the cells in each row of $M$ contain $4u$ symbols, and the cells in each column of $M$ contain $4u$ symbols. Each symbol in $A$ and $C$ belongs to 16 cells, in 4 rows and 4 columns. Each symbol in $B$ belongs to 4 cells, in 2 rows and 2 columns. It is not hard to construct a balanced framework $R$ that has $M$ as its admissible symbol array.

\begin{figure} \begin{center}
\begin{tikzpicture}
\draw[gray,step=0.600cm] (0,0) grid (7.200,2.400);
\draw[black] (0.360,0.360) rectangle +(0.480,0.480); \fill[black] (0.360,0.840) circle (2pt) (0.840,0.360) circle (2pt); \draw[black] (1.560,0.360) rectangle +(0.480,0.480); \fill[black] (1.560,0.840) circle (2pt) (2.040,0.360) circle (2pt); \draw[black] (0.360,1.560) rectangle +(0.480,0.480); \fill[black] (0.360,2.040) circle (2pt) (0.840,1.560) circle (2pt); \draw[black] (1.560,1.560) rectangle +(0.480,0.480); \fill[black] (1.560,2.040) circle (2pt) (2.040,1.560) circle (2pt); \draw[black] (0.960,0.960) rectangle +(0.480,0.480); \fill[black] (0.960,0.960) circle (2pt) (1.440,1.440) circle (2pt); \begin{scope} \clip (0.060,0.480) rectangle (0.360,1.920); \draw[black] (-0.240,0.960) rectangle +(0.480,0.480); \fill[black] (-0.240,0.960) circle (2pt) (0.240,1.440) circle (2pt); \end{scope} \draw[black] (2.160,0.960) rectangle +(0.480,0.480); \fill[black] (2.160,0.960) circle (2pt) (2.640,1.440) circle (2pt); \draw[black] (2.760,0.360) rectangle +(0.480,0.480); \fill[black] (2.760,0.840) circle (2pt) (3.240,0.360) circle (2pt); \draw[black] (3.960,0.360) rectangle +(0.480,0.480); \fill[black] (3.960,0.840) circle (2pt) (4.440,0.360) circle (2pt); \draw[black] (2.760,1.560) rectangle +(0.480,0.480); \fill[black] (2.760,2.040) circle (2pt) (3.240,1.560) circle (2pt); \draw[black] (3.960,1.560) rectangle +(0.480,0.480); \fill[black] (3.960,2.040) circle (2pt) (4.440,1.560) circle (2pt); \draw[black] (3.360,0.960) rectangle +(0.480,0.480); \fill[black] (3.360,0.960) circle (2pt) (3.840,1.440) circle (2pt); \draw[black] (4.560,0.960) rectangle +(0.480,0.480); \fill[black] (4.560,0.960) circle (2pt) (5.040,1.440) circle (2pt); \draw[black] (5.160,0.360) rectangle +(0.480,0.480); \fill[black] (5.160,0.840) circle (2pt) (5.640,0.360) circle (2pt); \draw[black] (6.360,0.360) rectangle +(0.480,0.480); \fill[black] (6.360,0.840) circle (2pt) (6.840,0.360) circle (2pt); \draw[black] (5.160,1.560) rectangle +(0.480,0.480); \fill[black] (5.160,2.040) circle (2pt) (5.640,1.560) circle (2pt); \draw[black] (6.360,1.560) rectangle +(0.480,0.480); \fill[black] (6.360,2.040) circle (2pt) (6.840,1.560) circle (2pt); \draw[black] (5.760,0.960) rectangle +(0.480,0.480); \fill[black] (5.760,0.960) circle (2pt) (6.240,1.440) circle (2pt); \begin{scope} \clip (6.840,0.480) rectangle (7.140,1.920); \draw[black] (6.960,0.960) rectangle +(0.480,0.480); \fill[black] (6.960,0.960) circle (2pt) (7.440,1.440) circle (2pt); \end{scope} 
\begin{scope}[yshift=3cm, xshift=7.2cm, xscale=-1]
\draw[gray,step=0.600cm] (0,0) grid (7.200,2.400);
\draw[black] (0.360,0.360) rectangle +(0.480,0.480); \fill[black] (0.360,0.840) circle (2pt) (0.840,0.360) circle (2pt); \draw[black] (1.560,0.360) rectangle +(0.480,0.480); \fill[black] (1.560,0.840) circle (2pt) (2.040,0.360) circle (2pt); \draw[black] (0.360,1.560) rectangle +(0.480,0.480); \fill[black] (0.360,2.040) circle (2pt) (0.840,1.560) circle (2pt); \draw[black] (1.560,1.560) rectangle +(0.480,0.480); \fill[black] (1.560,2.040) circle (2pt) (2.040,1.560) circle (2pt); \draw[black] (0.960,0.960) rectangle +(0.480,0.480); \fill[black] (0.960,0.960) circle (2pt) (1.440,1.440) circle (2pt); \begin{scope} \clip (0.060,0.480) rectangle (0.360,1.920); \draw[black] (-0.240,0.960) rectangle +(0.480,0.480); \fill[black] (-0.240,0.960) circle (2pt) (0.240,1.440) circle (2pt); \end{scope} \draw[black] (2.160,0.960) rectangle +(0.480,0.480); \fill[black] (2.160,0.960) circle (2pt) (2.640,1.440) circle (2pt); \draw[black] (2.760,0.360) rectangle +(0.480,0.480); \fill[black] (2.760,0.840) circle (2pt) (3.240,0.360) circle (2pt); \draw[black] (3.960,0.360) rectangle +(0.480,0.480); \fill[black] (3.960,0.840) circle (2pt) (4.440,0.360) circle (2pt); \draw[black] (2.760,1.560) rectangle +(0.480,0.480); \fill[black] (2.760,2.040) circle (2pt) (3.240,1.560) circle (2pt); \draw[black] (3.960,1.560) rectangle +(0.480,0.480); \fill[black] (3.960,2.040) circle (2pt) (4.440,1.560) circle (2pt); \draw[black] (3.360,0.960) rectangle +(0.480,0.480); \fill[black] (3.360,0.960) circle (2pt) (3.840,1.440) circle (2pt); \draw[black] (4.560,0.960) rectangle +(0.480,0.480); \fill[black] (4.560,0.960) circle (2pt) (5.040,1.440) circle (2pt); \draw[black] (5.160,0.360) rectangle +(0.480,0.480); \fill[black] (5.160,0.840) circle (2pt) (5.640,0.360) circle (2pt); \draw[black] (6.360,0.360) rectangle +(0.480,0.480); \fill[black] (6.360,0.840) circle (2pt) (6.840,0.360) circle (2pt); \draw[black] (5.160,1.560) rectangle +(0.480,0.480); \fill[black] (5.160,2.040) circle (2pt) (5.640,1.560) circle (2pt); \draw[black] (6.360,1.560) rectangle +(0.480,0.480); \fill[black] (6.360,2.040) circle (2pt) (6.840,1.560) circle (2pt); \draw[black] (5.760,0.960) rectangle +(0.480,0.480); \fill[black] (5.760,0.960) circle (2pt) (6.240,1.440) circle (2pt); \begin{scope} \clip (6.840,0.480) rectangle (7.140,1.920); \draw[black] (6.960,0.960) rectangle +(0.480,0.480); \fill[black] (6.960,0.960) circle (2pt) (7.440,1.440) circle (2pt); \end{scope}  
\end{scope}
%\path (0,1.2) node[left] {(b)} (0,4.2) node[left] {(a)};
\end{tikzpicture} \caption{Symbols in ``red'' position (top) and ``blue'' position (bottom).} \label{rhslope}
\end{center} \end{figure}
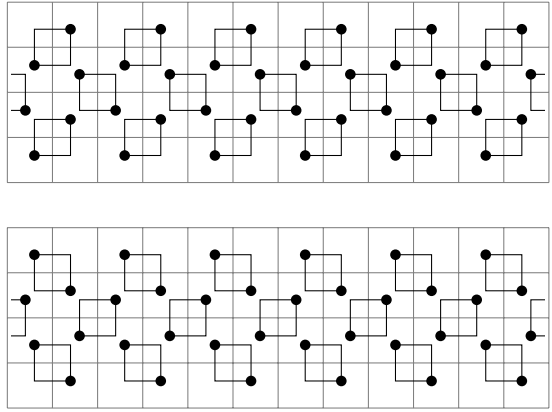

\begin{claimx} $R$ can be latinized if and only if $H$ is 2-in-4 colourable. \end{claimx}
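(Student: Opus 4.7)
My plan is to establish the equivalence by exhibiting a correspondence between $2$-in-$4$ colourings of $H$ and latinizations of $R$, based on a local analysis of each $4 \times 4$ block of the admissible-symbol array $M$.

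First, I would analyse the constraints inside a single active block (one for which $v_i \in e_j$, with $e_j$ being the $(k+1)$-st edge incident to $v_i$). Since $R$ is balanced with row lists of size $4u$, each symbol in a row list must appear exactly once in that row of any latinization. In particular, each of the six $b$-symbols $b_{i,k,0}, \dots, b_{i,k,5}$ lives in an admissible set of four cells forming a $2 \times 2$ block in the latin rectangle, and so must be placed on one of the two diagonals of that block. The $a$-symbols $a_{j,0}, a_{j,1}$ (respectively $a_{j,2}, a_{j,3}$) must then each appear exactly once in the top (respectively bottom) row of the block. A short case analysis, corresponding to Figure~\ref{rhslope}, shows that the valid fillings of an active block fall into exactly two patterns, which I shall call the \emph{red} and \emph{blue} patterns; once this bit is chosen, the diagonals of $b_{i,k,0}, \dots, b_{i,k,4}$ and the placements of the four $a$-symbols are all uniquely determined.

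For the forward direction, suppose $H$ admits a $2$-in-$4$ colouring $\chi$. I would fill each active block for $v_i \in e_j$ with the red or blue pattern according to $\chi(v_i)$, and each passive block (for $v_i \notin e_j$) with a $4 \times 4$ latin square on the symbols $c_{i,j,0}, \dots, c_{i,j,3}$. Within any single block, and hence within the portion of any row or column lying in that block, no symbol repeats; what remains to check is the column constraint across the four active blocks for a given edge $e_j$. Among the four top-row cells of column $4j$---one coming from each $v_i \in e_j$---the red-or-blue rule puts an $a$-symbol in that cell precisely when $\chi(v_i)$ is the ``right'' colour, so the $2$-in-$4$ condition guarantees that exactly two of these cells receive $a$-symbols (holding $a_{j,0}$ and $a_{j,1}$ once each) and the other two receive distinct $b$-symbols. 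The analogous check for $a_{j,2}, a_{j,3}$ and for the remaining columns of each edge block completes the verification that the resulting array is a latinization of $R$.

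For the converse, suppose $R$ has a latinization. The case analysis of the first step assigns to each active block at $v_i$ a local colour $\chi(v_i, k) \in \{\text{red}, \text{blue}\}$, where $k$ indexes the four edges at $v_i$. The hardest step, which I expect to be the main obstacle, is to prove that $\chi(v_i, k)$ does not depend on $k$, so that a genuine vertex colouring $\chi(v_i)$ is defined. Here the symbols $b_{i,k,5}$ are decisive: each has an admissible set consisting of two cells in the right column of the $(k+1)$-st active block at $v_i$ and two cells in the left column of the $(k+2)$-nd, and the diagonal choice for $b_{i,k,5}$ couples the pattern of the $(k+1)$-st block to that of the $(k+2)$-nd, forcing them to agree. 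Iterating around the cyclic chain $b_{i,0,5}, b_{i,1,5}, b_{i,2,5}, b_{i,3,5}$ gives $\chi(v_i, 0) = \chi(v_i, 1) = \chi(v_i, 2) = \chi(v_i, 3)$. The column-list argument of the previous paragraph, run in reverse, then shows that each edge $e_j$ contains exactly two vertices of each colour, so $\chi$ is a $2$-in-$4$ colouring of $H$.
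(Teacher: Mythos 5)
Your proposal is correct and follows essentially the same route as the paper: the two ``red/blue'' patterns per active block, the coupling of consecutive blocks at a vertex via the symbols $b_{i,k,5}$, and the column constraint on the $a_{j,k}$ forcing exactly two of the four vertices of each edge into each pattern. You actually spell out the $2\times 2$ diagonal mechanics and the cyclic chain argument that the paper leaves implicit in its figures.
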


\begin{proof} First suppose that $R$ is latinizable, and that $L$ is a latinization of $R$. Consider the symbols $b_{i,k,l}$, where $0 \le i \le u - 1$ is fixed, and $k$ and $l$ range over $\{0, \dots, 3\}$ and $\{0, \dots, 5\}$ respectively. These symbols are naturally associated with the vertex $v_i$. The first point to note is that the position of just one of these symbols in $L$ determines the position of all of them. Moreover, there are just two different ways in which they can be placed, called ``red position'' and ``blue position''. These are illustrated in Figure~\ref{rhslope}, where the dots indicate the positions of symbols from $B$.

We can colour the vertices of $H$ red or blue according to whether their associated symbols in $B$ are in red position or blue position. We claim that this in fact gives a 2-in-4 colouring of $H$. Consider the symbols $a_{j,k}$, where $0 \le j \le u - 1$ is fixed, and $k$ ranges over $\{0, \dots, 3\}$. Each of these symbols must appear four times in the columns $4j, \dots, 4j+3$ of $L$. But this can only happen if amongst the symbols in $B$ corresponding to the vertices incident with $e_j$, two are in red position, and two are in blue position. An example of how they might be placed is illustrated in Figure~\ref{rhcolumn}. Hence from $L$ we can create a 2-in-4 colouring of $H$, and so $H$ is 2-in-4 colorable.

\begin{figure} \begin{center} \begin{tikzpicture}
\tikzstyle{every node}=[font=\small]
\draw[black] (0.360,7.560) rectangle +(0.480,0.480); \fill[black] (0.360,7.560) circle (2pt) (0.840,8.040) circle (2pt); \draw[black] (1.560,7.560) rectangle +(0.480,0.480); \fill[black] (1.560,7.560) circle (2pt) (2.040,8.040) circle (2pt); \draw[black] (0.360,8.760) rectangle +(0.480,0.480); \fill[black] (0.360,8.760) circle (2pt) (0.840,9.240) circle (2pt); \draw[black] (1.560,8.760) rectangle +(0.480,0.480); \fill[black] (1.560,8.760) circle (2pt) (2.040,9.240) circle (2pt); \draw[black] (0.960,8.160) rectangle +(0.480,0.480); \fill[black] (0.960,8.640) circle (2pt) (1.440,8.160) circle (2pt); \begin{scope} \clip (0.060,7.680) rectangle (0.360,9.120); \draw[black] (-0.240,8.160) rectangle +(0.480,0.480); \fill[black] (-0.240,8.640) circle (2pt) (0.240,8.160) circle (2pt); \end{scope} \begin{scope} \clip (2.040,7.680) rectangle (2.340,9.120); \draw[black] (2.160,8.160) rectangle +(0.480,0.480); \fill[black] (2.160,8.640) circle (2pt) (2.640,8.160) circle (2pt); \end{scope} \draw[black] (0.360,5.160) rectangle +(0.480,0.480); \fill[black] (0.360,5.640) circle (2pt) (0.840,5.160) circle (2pt); \draw[black] (1.560,5.160) rectangle +(0.480,0.480); \fill[black] (1.560,5.640) circle (2pt) (2.040,5.160) circle (2pt); \draw[black] (0.360,6.360) rectangle +(0.480,0.480); \fill[black] (0.360,6.840) circle (2pt) (0.840,6.360) circle (2pt); \draw[black] (1.560,6.360) rectangle +(0.480,0.480); \fill[black] (1.560,6.840) circle (2pt) (2.040,6.360) circle (2pt); \draw[black] (0.960,5.760) rectangle +(0.480,0.480); \fill[black] (0.960,5.760) circle (2pt) (1.440,6.240) circle (2pt); \begin{scope} \clip (0.060,5.280) rectangle (0.360,6.720); \draw[black] (-0.240,5.760) rectangle +(0.480,0.480); \fill[black] (-0.240,5.760) circle (2pt) (0.240,6.240) circle (2pt); \end{scope} \begin{scope} \clip (2.040,5.280) rectangle (2.340,6.720); \draw[black] (2.160,5.760) rectangle +(0.480,0.480); \fill[black] (2.160,5.760) circle (2pt) (2.640,6.240) circle (2pt); \end{scope} \draw[black] (0.360,2.760) rectangle +(0.480,0.480); \fill[black] (0.360,2.760) circle (2pt) (0.840,3.240) circle (2pt); \draw[black] (1.560,2.760) rectangle +(0.480,0.480); \fill[black] (1.560,2.760) circle (2pt) (2.040,3.240) circle (2pt); \draw[black] (0.360,3.960) rectangle +(0.480,0.480); \fill[black] (0.360,3.960) circle (2pt) (0.840,4.440) circle (2pt); \draw[black] (1.560,3.960) rectangle +(0.480,0.480); \fill[black] (1.560,3.960) circle (2pt) (2.040,4.440) circle (2pt); \draw[black] (0.960,3.360) rectangle +(0.480,0.480); \fill[black] (0.960,3.840) circle (2pt) (1.440,3.360) circle (2pt); \begin{scope} \clip (0.060,2.880) rectangle (0.360,4.320); \draw[black] (-0.240,3.360) rectangle +(0.480,0.480); \fill[black] (-0.240,3.840) circle (2pt) (0.240,3.360) circle (2pt); \end{scope} \begin{scope} \clip (2.040,2.880) rectangle (2.340,4.320); \draw[black] (2.160,3.360) rectangle +(0.480,0.480); \fill[black] (2.160,3.840) circle (2pt) (2.640,3.360) circle (2pt); \end{scope} \draw[black] (0.360,0.360) rectangle +(0.480,0.480); \fill[black] (0.360,0.840) circle (2pt) (0.840,0.360) circle (2pt); \draw[black] (1.560,0.360) rectangle +(0.480,0.480); \fill[black] (1.560,0.840) circle (2pt) (2.040,0.360) circle (2pt); \draw[black] (0.360,1.560) rectangle +(0.480,0.480); \fill[black] (0.360,2.040) circle (2pt) (0.840,1.560) circle (2pt); \draw[black] (1.560,1.560) rectangle +(0.480,0.480); \fill[black] (1.560,2.040) circle (2pt) (2.040,1.560) circle (2pt); \draw[black] (0.960,0.960) rectangle +(0.480,0.480); \fill[black] (0.960,0.960) circle (2pt) (1.440,1.440) circle (2pt); \begin{scope} \clip (0.060,0.480) rectangle (0.360,1.920); \draw[black] (-0.240,0.960) rectangle +(0.480,0.480); \fill[black] (-0.240,0.960) circle (2pt) (0.240,1.440) circle (2pt); \end{scope} \begin{scope} \clip (2.040,0.480) rectangle (2.340,1.920); \draw[black] (2.160,0.960) rectangle +(0.480,0.480); \fill[black] (2.160,0.960) circle (2pt) (2.640,1.440) circle (2pt); \end{scope} 
\fill[gray!20] (0,0) rectangle +(0.6,0.6)
(0,4.2) rectangle +(0.6,0.6)
(0,4.8) rectangle +(0.6,0.6)
(0,9) rectangle +(0.6,0.6)
(1.2,0) rectangle +(0.6,0.6)
(1.2,4.2) rectangle +(0.6,0.6)
(1.2,4.8) rectangle +(0.6,0.6)
(1.2,9) rectangle +(0.6,0.6)
(0.6,1.8) rectangle +(0.6,0.6)
(0.6,2.4) rectangle +(0.6,0.6)
(0.6,6.6) rectangle +(0.6,0.6)
(0.6,7.2) rectangle +(0.6,0.6)
(1.8,1.8) rectangle +(0.6,0.6)
(1.8,2.4) rectangle +(0.6,0.6)
(1.8,6.6) rectangle +(0.6,0.6)
(1.8,7.2) rectangle +(0.6,0.6);
\draw[gray,step=0.600cm] (0,0) grid (2.400,9.600);
\path (0.3,9.3) node {$a_{j,0}$}
(0.9,6.9) node {$a_{j,0}$}
(1.5,4.5) node {$a_{j,0}$}
(2.1,2.1) node {$a_{j,0}$}

(1.5,9.3) node {$a_{j,1}$}
(2.1,6.9) node {$a_{j,1}$}
(0.3,4.5) node {$a_{j,1}$}
(0.9,2.1) node {$a_{j,1}$}

(0.9,7.5) node {$a_{j,2}$}
(0.3,5.1) node {$a_{j,2}$}
(2.1,2.7) node {$a_{j,2}$}
(1.5,0.3) node {$a_{j,2}$}

(2.1,7.5) node {$a_{j,3}$}
(1.5,5.1) node {$a_{j,3}$}
(0.9,2.7) node {$a_{j,3}$}
(0.3,0.3) node {$a_{j,3}$};

\end{tikzpicture} \caption{A possible arrangement of the symbols $a_{j,k}$ for $0 \le k \le 3$.} \label{rhcolumn}
\end{center} \end{figure}
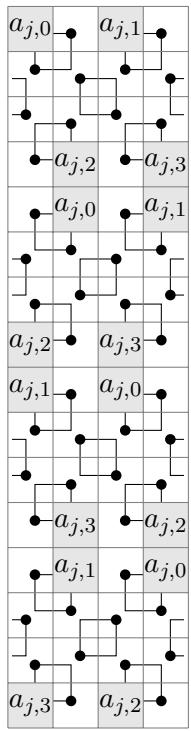

Conversely, suppose $H$ is 2-in-4 colourable. Then we can choose a 2-in-4 colouring of $H$, and place the symbols of $B$ in red or blue position according to whether their associated vertices in $H$ are red or blue. This leaves empty cells in which the symbols $a_{j,k}$ for all $0 \le j \le u - 1$ and $0 \le k \le 3$ can be placed. The symbols of $C$ can be placed without any difficulty, as the symbols $c_{i,j,0}$, $c_{i,j,1}$, $c_{i,j,2}$ and $c_{i,j,3}$ for a fixed $0 \le i \le u - 1$ and $0 \le j \le u - 1$ can only be placed in the $4 \times 4$ square whose top-left cell is $(4i, 4j)$. We can fill this square with any latin square on these four symbols. Hence $R$ is latinizable. This proves claim 1. \renewcommand{\qedsymbol}{} \end{proof}

We have described the construction of a balanced framework $R$ that is latinizable if and only if $H$ is 2-in-4 colourable. We can now use the procedure given in Theorem~\ref{fr1} to find an L-shaped partial latin square $Q$ of order $n = 4u^2 + 12u$ that realizes $R$. Thus $Q$ is completable if and only if $H$ is 2-in-4 colourable.

It remains to show that $Q$ satisfies \hc. Due to the way in which we constructed $R$, and subsequently $Q$, each empty cell supports either: (i) two symbols from $B$, (ii) one symbol from $B$ and two symbols from $A$, or (iii) four symbols from $C$. Each symbol from $B$ is missing from two rows and columns of $Q$, and each symbol from $A$ and $C$ is missing from four rows and columns of $Q$. Hence the conditions of Theorem~\ref{atleast1t} are satisfied, and $Q$ satisfies \hc.

The computation of the framework $R$ from $H$ can clearly be performed in polynomial time, and as shown in the proof of Theorem~\ref{fr1}, $Q$ can be computed from $R$ in polynomial time.
\end{proof}

A Turing machine equipped with an oracle for deciding Problem~\ref{plshcprob} could decide Problem~\ref{2-in-4} in polynomial time by transforming instances of Problem~\ref{2-in-4} into instances of Problem \ref{plshcprob}, using the reduction given in Lemma~\ref{plsred}, and then calling the oracle. Since Problem \ref{2-in-4} is NP-complete, it follows that Problem \ref{plshcprob} is NP-hard.

%If there existed a polynomial algorithm for deciding Problem~\ref{plshcprob}, then by Lemma~\ref{plsred}, it could be used to decide Problem~\ref{2-in-4} in polynomial time. Hence Problem~\ref{plshcprob} is NP-hard, which completes the proof of Theorem~\ref{plsnph}.

There are a number of variants of Problem~\ref{plshcprob} that can be shown to be NP-hard using the reduction given in Lemma~\ref{plsred}. For example, for any $\epsilon > 0$ we have the following two problems.

\begin{problem} Let $P$ be a partial latin square that satisfies \hc\ where the proportion of empty cells is less than $\epsilon$. Decide if $P$ is completable. \label{plsepsilone} \end{problem}

\begin{problem} Let $P$ be a partial latin square that satisfies \hc\ where the proportion of empty cells is greater than $1 - \epsilon$. Decide if $P$ is completable. \label{plsepsilonf} \end{problem}

In fact both these problems are NP-hard.

\begin{theorem} Problems \ref{plsepsilone} and \ref{plsepsilonf} are NP-hard. \label{epsilontheorem} \end{theorem}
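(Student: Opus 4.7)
The theorem comprises two NP-hardness claims; both will be established by combining the reduction of Lemma~\ref{plsred} with an appropriate transformation. For Problem~\ref{plsepsilone}, no transformation is needed: on input a 4-uniform 4-regular hypergraph with $u$ vertices, Lemma~\ref{plsred} outputs an L-shaped partial latin square of order $n = 4u^2 + 12u$ whose empty cells form a $4u \times 4u$ rectangle, giving proportion of empty cells equal to $16u^2/n^2 = 1/(u+3)^2$, which drops below any fixed $\epsilon$ once $u$ is taken large enough.

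For Problem~\ref{plsepsilonf} a padding construction is required. The plan is to take $Q$ of order $n$ produced by Lemma~\ref{plsred} and build $P'$ of order $N = (k+1)n$ with $k = \lceil 2/\epsilon \rceil$ by placing $Q$ in the upper-left $n \times n$ block with its original symbols $\{1, \dots, n\}$; partitioning the new symbols $\{n+1, \dots, (k+1)n\}$ into $k$ groups of $n$ consecutive values; filling the $n \times kn$ block to the right of $Q$ by juxtaposing $k$ order-$n$ Latin squares (one per new group) so that each row of this block is a permutation of all $kn$ new symbols; filling the $kn \times n$ block below $Q$ analogously so that each column is such a permutation; and leaving the bottom-right $kn \times kn$ block empty. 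The proportion of empty cells of $P'$ is then at least $(k/(k+1))^2 \ge 1 - 2/(k+1) \ge 1 - \epsilon$.

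Next I would verify that $P'$ is completable if and only if $Q$ is, and that $P'$ satisfies \hc. For completability: each of the top $n$ rows of $P'$ already contains all $kn$ new symbols via the filled block to its right, so in any completion the empty cells of $Q$ in those rows must be filled using only original symbols; the block below $Q$ forces the analogous constraint for columns; therefore the top-left block must be completed to a Latin square on $\{1, \dots, n\}$, which is possible exactly when $Q$ is. For \hc: any set $T$ of empty cells of $P'$ splits as $T_Q \cup T_B$ with $T_Q \subseteq Q$'s empty region and $T_B$ in the bottom-right; these lie in disjoint rows and columns, giving $\alpha(\sigma, T) = \alpha(\sigma, T_Q) + \alpha(\sigma, T_B)$ for every $\sigma$. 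The $T_Q$ contribution reduces to $Q$'s own \hc\ from Lemma~\ref{plsred}, since the adjacent filled blocks make every cell of $T_Q$ support only original symbols in $P'$. For the $T_B$ contribution I would exhibit an explicit assignment $f$ of each bottom-right cell to a symbol it supports, with every fibre $f^{-1}(\sigma)$ an independent set, by mapping cells of block $(b_r, b_c)$ to symbols of group $\pi(b_r, b_c)$ (filled inside each block as an $n \times n$ Latin square on that group), where $\pi$ is a $k \times k$ matrix with row $b_r$ a permutation of $\{1, \dots, k+1\} \setminus \{b_r + 1\}$ and column $b_c$ a permutation of $\{1, \dots, k+1\} \setminus \{b_c + 1\}$; restricting $f$ to any subset of the bottom-right then immediately yields the required \hil.

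The hard part will be establishing the existence of the matrix $\pi$ with the stated row, column, and forbidden-value constraints; this is a small combinatorial lemma, easily checked by hand for small $k$ and produced for general $k$ by standard Latin-square completion arguments. Importantly, the same matrix $\pi$ is precisely the data used to complete the bottom-right of $P'$ when $Q$ is completable, so a single construction simultaneously handles the completability equivalence and \hc. Given these verifications, the composition of Lemma~\ref{plsred} with the padding furnishes a polynomial reduction from Problem~\ref{2-in-4} to Problem~\ref{plsepsilonf}, and NP-hardness follows as in the proof of Theorem~\ref{plsnph}.
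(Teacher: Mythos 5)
Your treatment of Problem~\ref{plsepsilone} is exactly the paper's: the image of the reduction in Lemma~\ref{plsred} already has empty proportion $16u^2/n^2 = (u+3)^{-2}$, which is below $\epsilon$ for all but finitely many inputs. For Problem~\ref{plsepsilonf}, however, you take a genuinely different and much heavier route. The paper does not enlarge the square at all: it simply takes the same $Q$ of order $n$ and \emph{deletes} the symbols in the bottom-right $(n-4u)\times(n-4u)$ block. This works because (a) deleting symbols can only enlarge the set of cells supporting any given $\sigma$, so every $\alpha(\sigma,T)$ is non-decreasing and \hc\ is preserved; and (b) the row lists of the first $4u$ rows and the column lists of the first $4u$ columns are untouched, so the upper-left block of any completion is still a latinization of the framework $R$, and the completability equivalence with 2-in-4 colourability survives; the empty proportion is then at least $(n-4u)^2/n^2 > 1-\epsilon$ for all but finitely many $u$. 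Your padding construction ($P'$ of order $(k+1)n$ with a new-symbol border and an empty $kn\times kn$ corner) is correct in outline, and your decomposition $\alpha(\sigma,T)=\alpha(\sigma,T_Q)+\alpha(\sigma,T_B)$ and the fibre argument for $T_B$ are sound; but it rests on the deferred existence of the matrix $\pi$ (equivalently, a Latin square of order $k+1$ with prescribed consistent last row and column), which, while true, is an extra lemma the paper's one-step deletion trick avoids entirely. What your version buys is the ability to push the empty proportion arbitrarily close to $1$ for a \emph{fixed} input hypergraph by increasing $k$, but that flexibility is not needed for NP-hardness, since finitely many exceptional inputs are harmless. If you keep your construction, you must actually prove the $\pi$ lemma; otherwise I would recommend the deletion argument, ideally accompanied by the one-line justification of why \hc\ is preserved under deletion of symbols.
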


\begin{proof} In Lemma~\ref{plsred}, we gave a reduction from Problem~\ref{2-in-4} to Problem~\ref{plshcprob}, that maps a 4-regular 4-uniform hypergraph $H$ on $u$ vertices to an L-shaped partial latin square of order $n = 4u^2 + 12u$, where the upper left $4u \times 4u$ square is empty. So all but a finite number of hypergraphs are mapped to partial latin squares for which the proportion of empty cells is less than $\epsilon$. It follows that Problem~\ref{plsepsilone} is NP-hard.

We can also consider a variant of the reduction given in Lemma~\ref{plsred}, where after constructing the partial latin square we delete the symbols in the bottom right $(n - 4u) \times (n - 4u)$ square. It is not difficult to see that if some symbols are deleted from a partial latin square that satisfies \hc, the resulting partial latin square must also satisfy \hc. So this reduction maps 4-regular 4-uniform hypergraphs to partial latin squares that satisfy \hc, and all but a finite number of hypergraphs are mapped to partial latin squares for which the proportion of empty cells is greater than $1 - \epsilon$. Hence Problem~\ref{plsepsilonf} is also NP-hard. \end{proof}

%\begin{theorem} Problems \ref{plsepsilone} and \ref{plsepsilonf} are NP-hard. \label{plsnphepsilon} \end{theorem}

%\begin{proof} The reduction given in the proof of Lemma~\ref{plsred} can be adapted to show that Problems \ref{plsepsilone} and \ref{plsepsilonf} are NP-hard. In the proof, we show that given a 4-uniform 4-regular hypergraph $H$ of order $u$, we can construct a framework
%\[R = R(H) = (4u, 4u, n, R_1, \dots, R_{4u}, C_1, \dots, C_{4u}), \]
%where $n = 4u^2 + 12u$, which can be realized by an L-shaped partial latin square $Q$ of order $n$, such that $Q$ is completable if and only if $H$ is 2-in-4 colourable.
%We used Theorem~\ref{pls1} to show that $Q$ can be obtained from $R$. But this theorem states that $R$ can be realized by an L-shaped partial latin square of any order not less than $n$. In particular, we can take $Q$ to be of order $n/\epsilon$.

%The cells of the partial latin square $Q$ are filled apart from the top-left $4u \times 4u$ subsquare. If $u$ is large enough, the proportion of filled cells in $Q$ will be more than $1 - \epsilon$, for any $\epsilon > 0$.
%\end{proof}

\section{Concluding remarks}

The most obvious open question is whether Problem~\ref{plshcprob} is in NP, and indeed, whether it is in P. \hc\ can also be defined for graphs whose vertices are equipped with colour lists (see e.g. \cite{hoffmanjohnson}). Problem~\ref{plshcprob} is a special case of the following problem.

\begin{problem} Let $G$ be a perfect graph (see e.g. \cite{bondymurty}) whose vertices have colour lists. Decide if $G$ satisfies \hc. 
\end{problem}

It would also be interesting to find more classes of partial latin square for which \hc\ is a necessary and sufficient condition for completability. This has been done to some extent in \cite{bghj}, but there are no doubt more results that could be obtained along these lines. For example, \hc\ may be a necessary and sufficient condition for completability in the case of a partial latin square for which the filled cells form a rectangle with a $2 \times t$ rectangle of empty cells inside.

\end{document}